\newtheorem{assumption}{Assumption}
\newtheorem{criterion}{Criterion}
\title{Adaptive stepsize algorithms for Langevin dynamics\thanks{A.L. acknowledges support of a MAC-MIGS CDT Scholarship (EPSRC grant EP/S023291/1) and the Luxembourg National Research Fund (FNR) (16978012). A.L., B.L., and J.L. would like to thank the Isaac Newton Institute for Mathematical Sciences for support and hospitality during the programme \textit{The mathematical and statistical foundation of future data-driven engineering} when work on this paper was undertaken (EPSRC grant EP/R014604/1).}}
\author{A. Leroy\thanks{School of Mathematics, The University of Edinburgh, UK. (\email{alix.leroy@sms.ed.ac.uk}, \email{b.leimkuhler@ed.ac.uk}, \email{d.j.higham@ed.ac.uk})} \and  B. Leimkuhler\footnotemark[2] 
\and J. Latz\thanks{Department of Mathematics, The University of Manchester, UK. (\email{jonas.latz@manchester.ac.uk})}
\and D. J. Higham\footnotemark[2]}
\def \rhoeq{\rho_{\rm eq}}
\def \rhoequn{{\rho}_{\rm eq}^{\rm under}}
\def \hatrhoequn{\hat{\rho}_{\rm eq}^{\rm under}}
\begin{document}
\makeatletter 
\@mparswitchfalse%
\makeatother
\normalmarginpar
\maketitle
% REQUIRED
\begin{abstract}
We discuss the design of an invariant measure-preserving transformation 
for the numerical treatment of Langevin dynamics based on a rescaling of time, with the goal of sampling from an invariant measure. Given an appropriate monitor function which characterizes the numerical difficulty of the problem as a function of the state of the system, this method allows stepsizes to be reduced only when necessary, facilitating efficient recovery of long-time behavior. We study both overdamped and underdamped Langevin dynamics. 
We investigate how an appropriate correction term that ensures preservation of the invariant measure should be incorporated into a numerical splitting scheme.
Finally, we demonstrate the use of the technique on several model systems, including a Bayesian sampling problem with a steep prior.
\end{abstract}

% REQUIRED
\begin{keywords}
adaptivity, stochastic differential equation, timestepping, equilibrium sampling
\end{keywords}

% REQUIRED
\begin{AMS}

65C30, 65C05, 65P10
\end{AMS}

\section{Introduction}
\label{sec:background}

Underdamped Langevin dynamics describes %note - "dynamics" is singular!!
random movements of a particle in contact with an infinite energy reservoir. %\cite[Sect. 2.2.3]{LelievreRoussetStoltz2010}. 
The position vector $x \in \Omega \subset \mathbb{R}^d$ and the momentum vector $p=M\dot{x},\ p \in \mathbb{R}^d$, where $M$ is a $d\times d$ positive definite mass matrix,  describe the state of this particle throughout time. They satisfy the stochastic differential equations (SDEs):
\begin{align} \label{eq:full_langevin_1d}
\begin{split}
\begin{cases}
    d x  &= M^{-1} p d t, \\
    d p &= - \nabla_x V(x) d t - \gamma p d t + \sqrt{\frac{2 \gamma }{\beta}}M^{1/2} d W(t).
\end{cases}
\end{split}
\end{align}
Here $V: \mathbb{R}^d \rightarrow \mathbb{R}$ is a potential function, 
$(W(t))_{t \geq 0}$ is a standard Brownian motion on $\mathbb{R}^d$, $ \beta:= (k_{\rm B} T)^{-1}$ is a parameter representing the reciprocal temperature, where $k_{\rm B}$ denotes the Boltzmann's constant.
%, a proportionality factor between the energy in the particle motion and the absolute temperature $T$. 
%Throughout this work, $x, p, (W(t))_{t \geq 0}$ and other random objects are defined on a suitable probability space.  
%(\Omega, \mathcal{A}, \mathbb{P})$.
% we don't use these names later, so why introduce them?
In Langevin dynamics a system loses energy through friction, controlled  by the parameter $\gamma$, and gains energy through stochastic fluctuations in such a way that the equilibrium state is described by the Gibbs-Boltzmann distribution with density $\rho_{\beta}(x,p) \propto \exp(-\beta H(x,p))$, where $H(x,p)=p^T{M^{-1}p/2+V(x)}$ is the energy of the system. %The mass matrix $M$, although relevant in the physical setting, plays only a formal role in terms of analysis of numerical methodology, and can be taken to be the identity matrix for the study of both the equations and their numerical solution.  (We adopt this convention in the sequel.)

The Langevin system and its overdamped counterpart (see Section \ref{sec:running examples})
have been widely used as sampling methods in many applications; notably in physics, chemistry, biology, social science, and machine learning (\cite{Donev2018,BeardSchlick2000,HutterOttinger1998, KantasParpasPavliotis2019,LeimkuhlerMatthews2015,Mao2015}). 
In sampling, the potential $V$ is chosen so that the SDE has a desired invariant measure, and hence samples consistent with this 
measure may be obtained from long time paths. This integration is most often performed using a numerical method with fixed stepsize. When the trajectories of the system undergo sudden changes, or exhibit highly oscillatory modes,  the stepsize must be small enough for the numerical method to remain stable. When the frequencies of the system vary as the system visits different regions, a fixed stepsize would need to be sufficiently small throughout the integration to reflect the most extreme scenario. These issues can lead to substantial computational overhead since the need for small stepsizes might be restricted to a region which is rarely visited. By varying the stepsize, adequate stability and accuracy can be maintained everywhere while computational costs are reduced. Automatic (variable) stepsize
selection has been incorporated into some existing molecular dynamics software packages, albeit in a ad hoc manner; for example, in Orient \cite{orient} and OpenMM \cite[\textit{VariableVerletIntegrator}]{openMM}. Situations that necessitate the use of an adaptive stepsize in Langevin dynamics include applications in Bayesian inference where the unknown parameters lie in bounded domains, e.g., in  %(think of prices in an econometric model which should, in typical cases, be positive, or the allocation of a certain resource--e.g. labor--to different tasks in a manufacturing model, which may restrict the sum of certain parameters to be less than a prescribed value). 
 machine learning when the parameters of an artificial neural network are constrained in order to improve
training \cite{Leimkuhler2021BetterTU}, or in Bayesian filtering \cite{VanTreesHarryL2007} -- we study problems of this form in Section \ref{sec:bayesian}. The propagation of rigid particles immersed in a Stokes fluid \cite{Donev2018} and ash cloud modelling \cite{KATSIOLIDES2018320} also require variable stepsizes for
the propagation of particles in boundary layers. These examples highlight the potential importance of variable stepsize discretization, using smaller steps only where needed in the simulation; hence allowing for both higher accuracy and faster convergence toward the target distribution. 

In this article, we introduce a time transformation for variable stepsize simulation of SDEs in a similar manner to that used in the deterministic setting \cite{HuangLeimkuhler1997} and meshing contexts \cite{Han2021,HuangRenRussel1994}. The transformation uses knowledge of a monitor function to modify the dynamics in both the overdamped and underdamped cases. In our context, a correction term arising from the fluctuation-dissipation theorem is required to ensure the desired target distribution. The focus in this article is on the calculation of averages with respect to the invariant probability measure defined by the Fokker-Planck equation associated to the SDE.
% While all computations are finite in nature, the use of very long paths can provide approximations of averages with respect to the invariant measure of the system.
We first present an overview of related work, and introduce a simple example to illustrate the efficiency of the transformed dynamics in Section \ref{sec:running examples}. The main contributions of this work are as follows.
\begin{itemize}
    \item In Section \ref{sec:monitor_func}, a framework to design an efficient monitor function is presented, applicable even when little is known about the problem. We establish criteria on the function that are sufficient to maintain existence of the solution of the transformed dynamics. 
    \item In Section \ref{sec:correct_timestepping}, confirmation that the continuous overdamped transformed dynamics has the desired Gibbs distribution as a unique distribution when respecting the criteria outlined in Section \ref{sec:monitor_func}. %The underdamped dynamics has the equivalent Gibbs distribution but uniqueness is more difficult to prove so we give a sketch of proof.   
    \item In Section \ref{sec:numerical_cons}, comparison of several numerical integrators to simulate the Gibbs distribution using the transformed underdamped dynamics.
    \item In Section \ref{sec:numerical_experiments} computational results for both overdamped and underdamped dynamics, in one and two dimensions, emphasizing the benefits and limitations of the method. 
\end{itemize}
We give brief conclusions in Section~\ref{sec:discussions_conclusions}.

There have been a number of approaches to designing variable stepsize strategies for strong (pathwise) approximation of SDEs \cite{Burrage2002,Gaines1997,Mauthner1998} with relatively little work addressing the issue of weak approximation \cite{Lemaire2007,Rossler2004,Szepessy2001,Valinejad2010,MerleProhl2021}. In particular, the finite time analysis of tamed numerical methods adopts a similar strategy: they control the size of the drift response by adapting the stepsize at each iteration \cite{Kelly2017,Neuenkirch2019,Sabanis2013}. Works focusing on the approximation of invariant measures are even rarer, since the relevant analyses, even in the case of fixed stepsize, are fairly recent. The authors of  \cite{Lamba2007} establish an adaptive algorithm preserving the ergodicity of the original SDEs and design an algorithm to control the drift term by halving or doubling the stepsize based on local error estimation and a user-provided tolerance. In \cite{MilsteinTretyakov2007}, the authors focus on demonstrating the ergodicity of quasi-symplectic integrators, i.e., integrators that, in the limit of small noise,  are symplectic, and they show that rejecting exploding trajectories does not affect ergodicity. Variable timestepping is certainly not the only approach to improve efficiency of Langevin simulations. For example, one may use an accept-reject step in conjunction with a Langevin proposal density \cite{RobertsTweedie1996bias} or Hamiltonian Monte Carlo \cite{betancourt2018conceptual,DUANE1987216,GirolamiCalderhead2011} to handle the bias that may arise. In principle such techniques could be combined with variable stepsize to further improve efficiency.

Preconditioning methods offer an alternative approach for improving the exploration of the space in the Hamiltonian case; however these tend to be difficult to implement and computationally expensive \cite{GirolamiCalderhead2011,MartinWilcox2012,RobertsStramer2002}. Related problems in sampling with the goal of improving convergence are discussed in \cite{Duncan_2016}; for example in heavy-tailed sampling, authors use a complementary strategy by inflating the stepsize in regions of slow changes \cite{heavytail}. Generally, the approach is to either increase the spectral gap or minimize the asymptotic variance of the approximation \cite{Lelievre2013, lelievre_stoltz_2016, RobertsTweedie1996}. Finally, similar methods have been exploited in the context of machine learning and stochastic optimization, as in \cite{MaChenFox2015,Mandt2016}, where adapting the learning rate is equivalent to varying the stepsize.

\section{Motivating example}
\label{sec:running examples}
We consider an example with a modified harmonic potential, where the force applied to the molecule depends on the frequency and has the form
\begin{align} \label{eq:pot_spring_der}
    F(x)=-V'(x) = (\omega(x)^2+c)x,\text{ with }
 \omega(x) = \frac{b}{\frac{b}{a}+(x-x_0)^2},
\end{align}
where $\omega: \mathbb{R} \rightarrow \mathbb{R}$ is the state dependent frequency. The potential $V$ is defined by 
\begin{multline} \label{eq:pot_spring}
        V(x) = \frac{1}{2} \biggl( a^{\frac{3}{2}}b^{\frac{1}{2}} x_0 \arctan \left (\frac{a}{b} (x-x_0)\right )  \\ 
         + \frac{a b (a(x-x_0)x_0-b)}{a (x-x_0)^2+b} + c (x-x_0)^2 + 2 c (x-x_0) x_0 \biggr).
\end{multline}
% \begin{figure}[H]
% \centering
% \includegraphics[width=\textwidth]{./figures/spring_potential.pdf}
% \caption{On the left, the potential $V(x)$ and the function $\omega(x)$ with $a=5$, $x_0=0.5$ and $b=1$. A smaller $b$ increases the values of $\omega$ around $x_0$. On the right, the gradient $V'$ and an example of monitor function $g$} 
% \label{fig:spring_potential}
% \end{figure}
\begin{figure}[htp]
\hfill
\subfigure[The potential $V(x)$ and the function $\omega(x)$ with $a=5$, $x_0=0.5$ and $b=1$. A smaller value of $b$ increases the values of $\omega$ around $x_0$.]{\includegraphics[width=5cm]{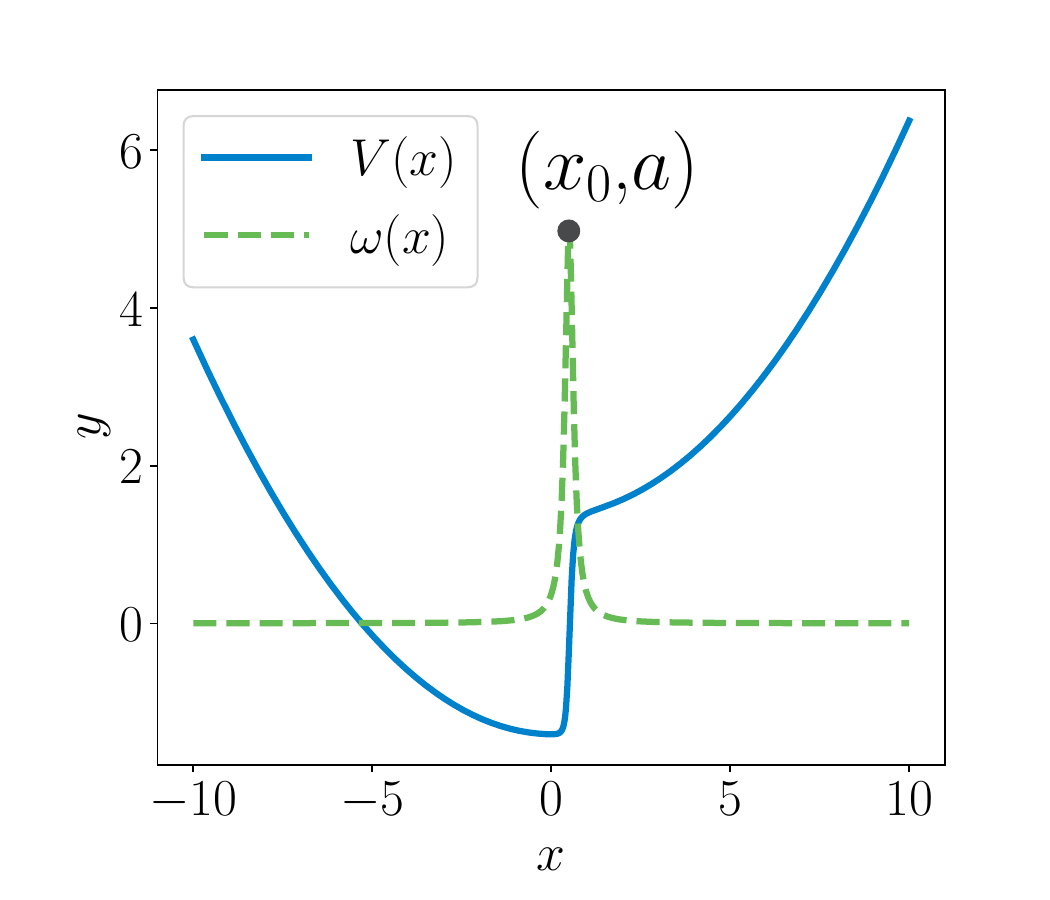}\label{fig:spring_left}}
\hfill
\subfigure[The gradient $V'$ and an example of monitor function $g$.]{\includegraphics[width=5cm]{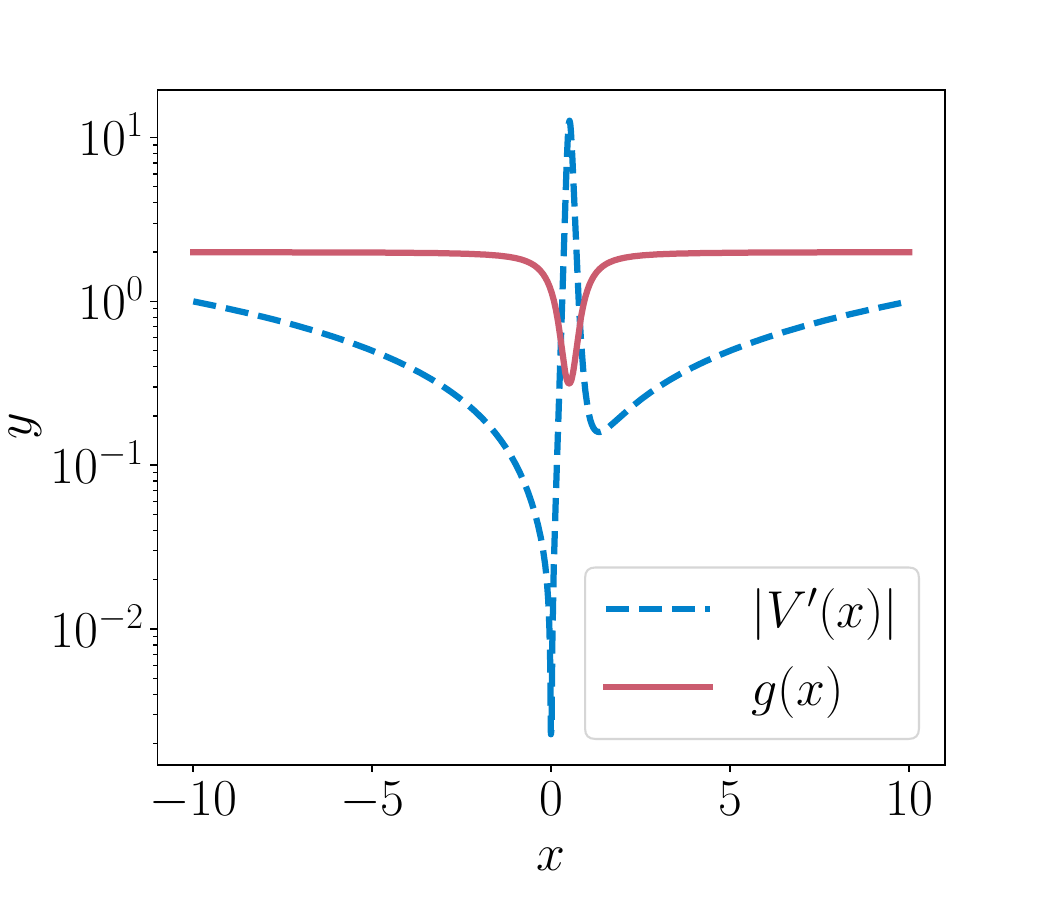}\label{fig:spring_right}}
\hfill
\caption{Plots of the potential, the absolute value of its derivative and an example of a monitor function $g$.}
\end{figure}
The Boltzmann-Gibbs distribution with density
\begin{align} \label{eq:Gibbs_eq}
\rho(x)=\frac{1}{Z} \exp(-\beta V(x)),
\end{align}
and normalising factor 
\begin{align} \label{eq:Gibbs_eq_Z}
Z = \int_{\mathbb{R}^d} \exp(-\beta V(x)) d x,
\end{align}
is the unique invariant distribution of the overdamped SDE 
\begin{align} \label{eq:original_sde}
    d x = - \nabla V(x) d t + \sqrt{2 \beta^{-1}} d W
\end{align}
by \cite[Proposition 4.2]{Pavliotis2014}. This result holds if the potential $V$ is smooth and confining.
%$\lim_{\|x\| \to + \infty} V(x) = + \infty$. 
%While the example with the potential \eqref{eq:pot_spring} is a scalar, the general system form of the dynamic has dimension $x: [0,\infty) \rightarrow \mathbb{R}^d$. 
The overdamped SDE \eqref{eq:original_sde} can be obtained from the underdamped system of SDEs \eqref{eq:full_langevin_1d} by asymptotic methods, letting the mass go to zero or the friction to infinity (see \cite[Sect. 6.5.1]{Pavliotis2014}). 
To obtain samples from the stationary distribution \eqref{eq:Gibbs_eq}, the standard approach is to discretize the overdamped SDE \eqref{eq:original_sde}  using the Euler-Maruyama scheme
\begin{align}\label{eq:em_sde}
    X_{n+1} =X_n - \nabla V(X_n) h + \sqrt{2 \beta^{-1} h} Z_n.
\end{align}
The stepsize $h$ is fixed, $X_{n} \approx X(nh)$ and the $Z_0, Z_1, \ldots \sim \mathcal{N}(0,1)$ form a sequence of i.i.d. standard normal random variable distributed.
Figure \ref{fig:spring_left} shows the values taken by the potential $V$, close to a harmonic potential in most of the domain but steepening near $x_0$. An example of an associated monitor function $g$ is provided in Figure \ref{fig:spring_right}. This follows the variation of $\frac{1}{|V'(x)|}$ whilst being bounded by strictly positive values $m<M$, effectively decreasing the stepsize around the singularities, see equation \eqref{eq:monitor_phi} in Section \ref{sec:monitor_func}. The parameters $m$ and $M$ define lower and upper bounds on the multiplicative factor of the stepsize.

In the next section, we describe a direct time-rescaled SDE which does not conserve the invariant measure, and then concentrate on invariant measure-preserving transformed SDE. Variable stepsize  integration is implemented by defining a new time variable $\tau$ such that $\tilde{x}(\tau)=x(t(\tau))$. For a given monitor function $g(x) >0$, for all $x \in \mathbb{R}^d$, the time $t$ is monotone in $\tau$ and defined by the relation
\begin{align} \label{eq:rescaling_dt}
  d t &= g(x(t(\tau))) d \tau, \\
  d W(t) &= \sqrt{g(\tilde{x})}d \widetilde{W}(\tau). 
\end{align}
% \cite[Sec. 3.3]{Pavliotis2014} 
Replacing the terms \eqref{eq:rescaling_dt} in the overdamped SDE \eqref{eq:original_sde}, the direct time-rescaled SDE is
\begin{equation}\label{eq:naive_time_rescaled}
d \tilde{x} =  -\nabla V(\tilde{x}) g(\tilde{x}) d \tau + \sqrt{2 \beta^{-1} g(\tilde{x})} d \widetilde{W}(\tau),
\end{equation} 
with the Euler-Maruyama numerical scheme yielding
\begin{equation} \label{eq:em_sde_naive}
    \tilde{X}_{n+1} =\tilde{X}_n - \nabla V(\tilde{X}_n) g(\tilde{X}_n) h + \sqrt{2 \beta^{-1} g(\tilde{X}_n)h} Z_n. 
\end{equation}
% Note that this scheme is equivalent to the scheme given by Equation \eqref{eq:em_sde} with a time step $\tilde{h}$ defined by $g(x)h$.\\
Using \eqref{eq:em_sde} and \eqref{eq:em_sde_naive}, we obtain samples using the discretized scheme and plot the histograms of the positions alongside the normaliced  stationary density \eqref{eq:Gibbs_eq} in Figure \ref{fig:histograms}.
\begin{figure}[htp]
\centering
\includegraphics[width=0.5\textwidth]{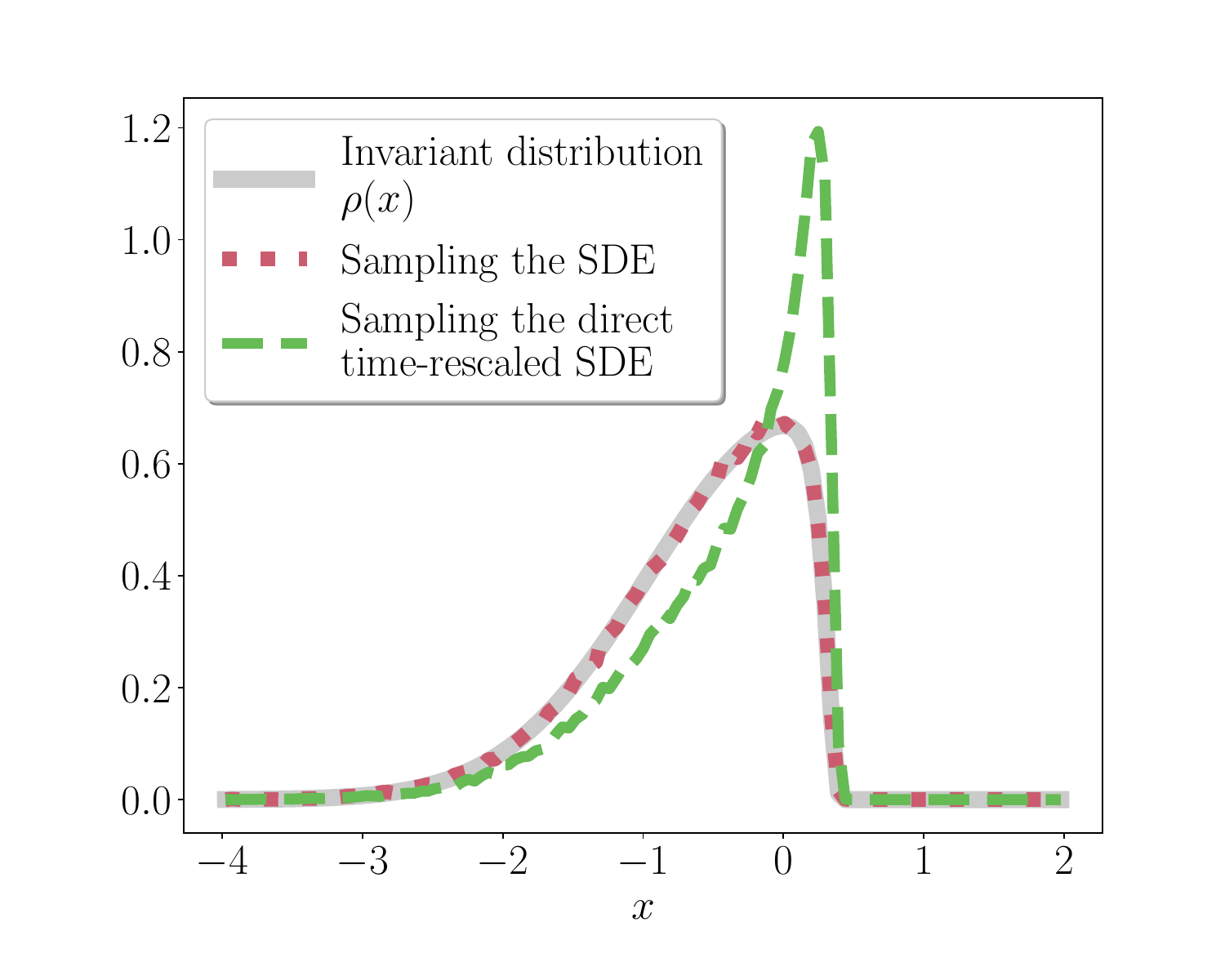}
    \caption{Histograms of the invariant distribution and samples obtained using the Euler-Maruyama scheme applied (a) to the original SDE \eqref{eq:original_sde} and (b) to the direct time-rescaled SDE \eqref{eq:naive_time_rescaled} after $50000$ steps, with step of size  $h=0.001$, temperature $\beta^{-1}=0.1$, and $10^5$ samples. The function $g$ is bounded by $M=2$, $m=0.001$. The parameters of the potential are set to $c=0.1$, $b=0.1$, $a=10$ and $x_0=0.5$.}
    \label{fig:histograms}
\end{figure}
The numerical scheme applied to the direct time-rescaled SDE in Equation \eqref{eq:em_sde_naive} does not yield samples that agree with the invariant distribution \eqref{eq:Gibbs_eq}. In fact this has nothing to do with the discretization scheme itself; it is a consequence only of the time-rescaling of the dynamics.  It can be understood by examining the adjoint operator of the infinitesimal generator associated with the direct time-rescaled equation \eqref{eq:naive_time_rescaled}:
\begin{equation} \label{eq:forw_kolm_naive}
        \mathcal{\tilde{L}}^* \rho(t,\tilde{x}) =- \nabla \cdot \left( -\nabla V(\tilde{x})g(\tilde{x}) \rho(t,\tilde{x}) + \beta^{-1} \nabla  g(\tilde{x})  \rho(t,\tilde{x}) \right).
\end{equation}
The invariant distribution \eqref{eq:Gibbs_eq} is a stationary density if it is in the kernel of the adjoint operator $\mathcal{\tilde{L}}^*$ \cite[Sect. 4.1]{Pavliotis2014}. We see that
\begin{equation} \label{eq:drift_term_naive}
        \mathcal{\tilde{L}}^* \rho(\tilde{x}) =- \beta \nabla V(\tilde{x}) \rho(\tilde{x}), 
\end{equation}
noting that the invariant distribution \eqref{eq:Gibbs_eq} respects $\nabla \rho(x) = -\beta \nabla V(x) \rho(x)$ by the chain rule. As $\mathcal{\tilde{L}}^* \rho(\tilde{x}) \neq 0$, the probability distribution \eqref{eq:Gibbs_eq} is not in the kernel of the adjoint operator $\mathcal{\tilde{L}}^*$.
%\todo[inline]{Jonas:
%so the Fokker-Planck equation or forward Kolmorogov equations are time-dependent PDEs. Thus, I would rather call this the `adjoint of the infinitesimal generator associated with the naive time-rescaled equation'. \\
%Alix: I could also write: \\
%The invariant distribution \eqref{eq:Gibbs_eq} is a stationary density if it solves the initial value problem for the corresponding Fokker-Planck equation: \cite[Sect. 4.5]{Pavliotis2014}
%\begin{align*} 
%        \frac{\partial}%{\partial t}\rho(t,\tilde{x}) &= \nabla \cdot \left( -\nabla V(\tilde{x})g(\tilde{x}) \rho(t,\tilde{x}) + \beta^{-1} \nabla  g(\tilde{x})  \rho(t,\tilde{x}) \right)\\
%        \rho(t,\tilde{x}) &= \rhoeq(x)
%\end{align*}
%If \eqref{eq:Gibbs_eq} is the invariant distribution, then
%\begin{equation} \label{eq:drift_term_naive}
 %       \mathcal{\tilde{L}}^* \rhoeq(\tilde{x}) =- \beta \nabla V(\tilde{x}) \rhoeq(\tilde{x}), 
%\end{equation}
%noting that the invariant distribution \eqref{eq:Gibbs_eq} respects $\nabla \rhoeq(x) = -\beta \nabla V(x) \rhoeq(x)$ by the chain rule. As $\mathcal{\tilde{L}}^* \rhoeq(\tilde{x}) \neq 0$, the invariant distribution \eqref{eq:Gibbs_eq} is not a solution to the initial value problem.
%}
% bjl:I commented the above todo out because I think it was resolved in the text now.
%The nonzero term on the right hand side of \eqref{eq:drift_term_naive} appears due 
%to the distribution of the gradient operator on the drift term, which creates an extra drift term. 

This motivates a correction that leads to a invariant measure-preserving transformed SDE that retains the stationary distribution of the original SDE
% \todo[inline]{
% \footnote{Correction terms similar to one used here are commonly introduced in Brownian dynamics with configuration-dependent diffusion, see \cite{Donev2018}.} 
% Alix: I am not sure we want that footnote because we details after where this term comes from. Jonas: Let's not have footnotes we don't need!}:
\begin{align} \label{eq:transformed_overdamped_sde}
d x & = -g(x)\nabla V(x) d t  + \beta^{-1}  \nabla g(x) d t + \sqrt{2\beta^{-1} g(x)} d W.
\end{align}
From this point forward, we will only consider this invariant measure-preserving transformed SDE using $x$ and $t$ variables, which we refer to as the IP-transformed SDE. The associated Euler-Maruyama scheme is
\begin{equation} \label{eq:em_tr_sde}
    X_{n+1} =X_n - \nabla V(X_n) g(X_n) h + \beta^{-1}  \nabla g(X_n) h +\sqrt{2 \beta^{-1} g(X_n)h} Z_n. 
\end{equation}
We note that while the IP-transformed dynamics offers the advantage of using ``easy to implement'' schemes of higher weak order, we chose to use the Euler-Maruyama scheme as the aim is to compare the dynamics under similar discretization.
%(such as the Leimkuhler-Matthews \cite{LeimkuhlerMatthews2013}), we chose to use the Euler-Maruyama scheme as the aim is to compare the dynamics under similar discretization. }

\begin{figure}[htp]
\hfill
\subfigure[Samples obtained with step of size $h=0.001$ to reach the final time $T_f=50$.]{\includegraphics[width=5cm]{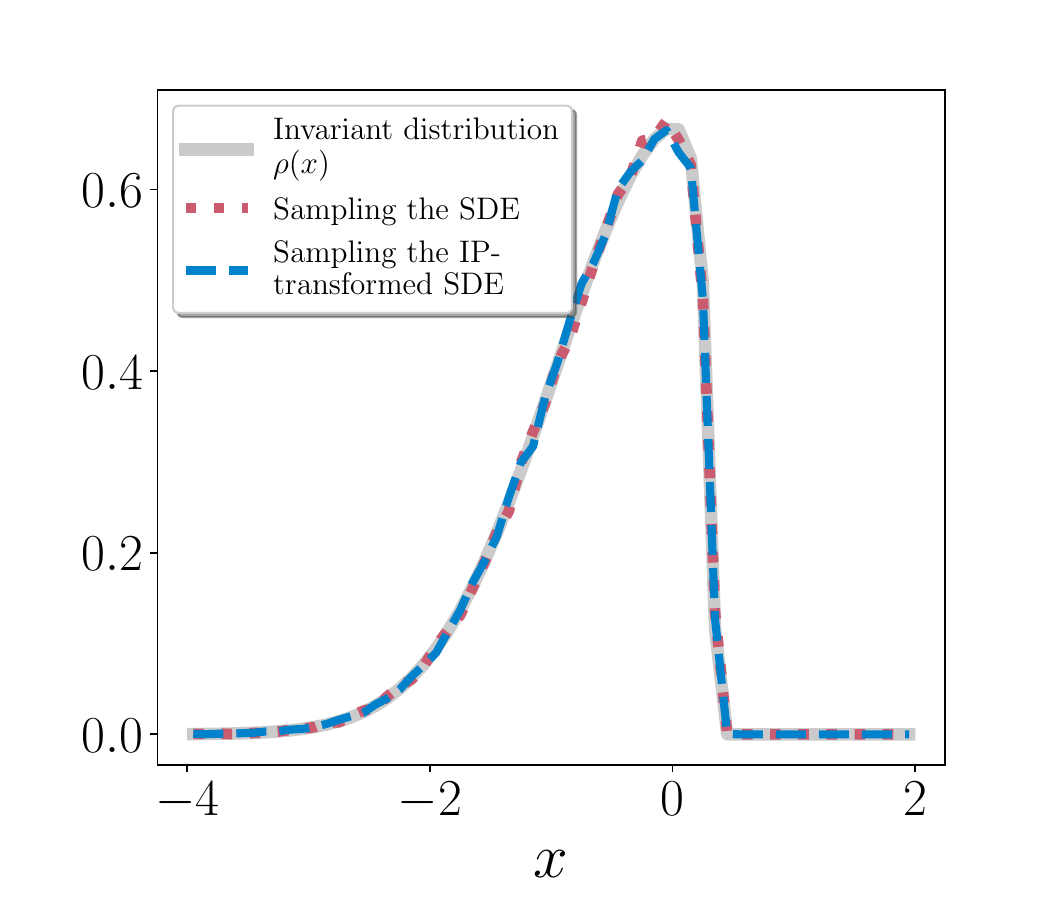}\label{fig:tr_small_h}}
\hfill
\subfigure[Samples obtained with step of size $h=0.05$ to reach the final time $T_f=70$.]{\includegraphics[width=5cm]{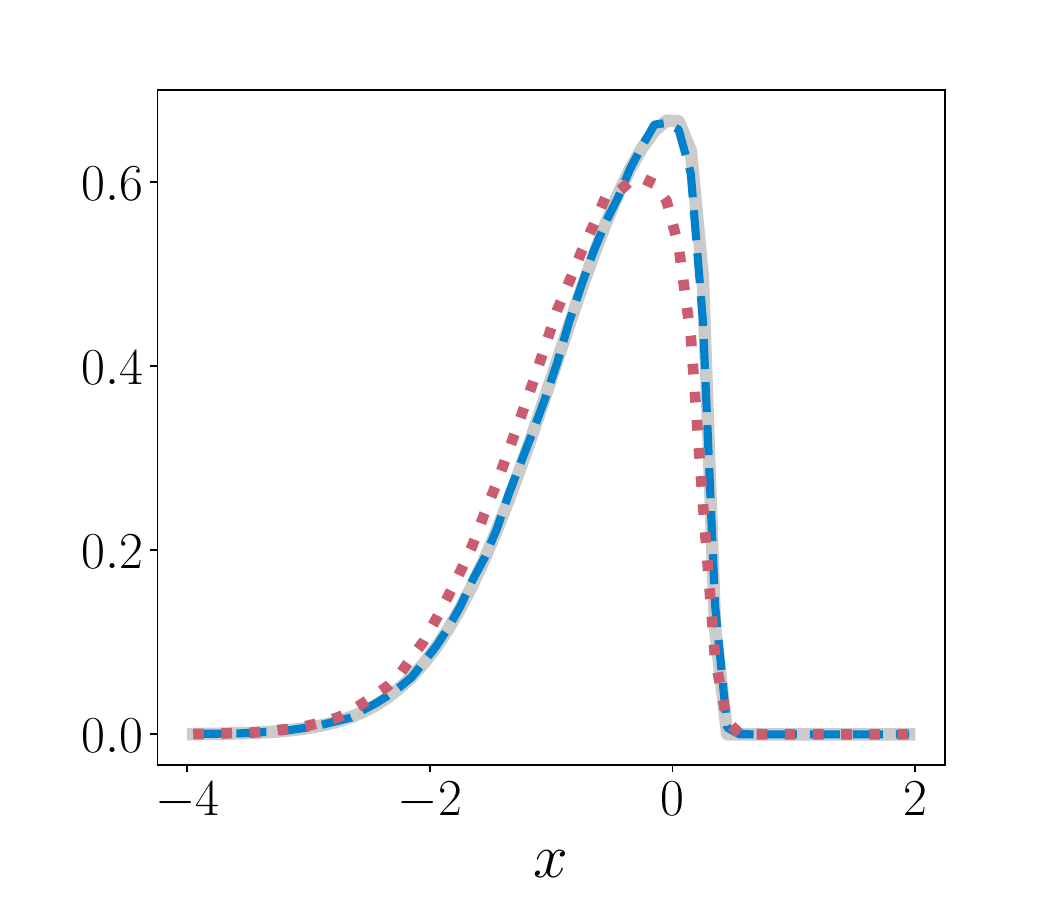}\label{fig:tr_large_h}}
\hfill
\caption{Histograms of the invariant distribution and samples from Euler-Maruyama scheme applied to the original SDE \eqref{eq:original_sde} and to the IP-transformed SDE \eqref{eq:transformed_overdamped_sde} with $n_s=10^5$ samples, using the function $g$ plotted in Figure \ref{fig:spring_right} with $M=2$, $m=0.001$, a temperature of size $\beta^{-1}=0.1$. The parameters of the potential are set to $c=0.1$, $b=0.1$, $a=10$ and $x_0=0.5$.}
\end{figure}
As shown in Figure \ref{fig:tr_small_h}, the discretized scheme \eqref{eq:em_tr_sde} allows us to sample from the invariant distribution $\rhoeq$. It not only provides samples from the desired distribution, but moreover remains accurate for larger stepsizes, as seen in Figure \ref{fig:tr_large_h}. While the processes \eqref{eq:original_sde} and \eqref{eq:transformed_overdamped_sde} are different, the values taken by $g(X_n)$ can be used as a proxy for the computational effort provided by the invariant-preserving transformed dynamics. We note that to yield the sample in Figure \ref{fig:tr_large_h}, the mean value of the monitor function $g$ over all samples and iterations is $1.513$, which suggests that simulating the SDE \eqref{eq:transformed_overdamped_sde} is computationally more efficient, i.e. a timestep of size $1.513$ would yield a similar accuracy. 

We could alternatively obtain samples from the invariant measure by using the direct time-rescaled scheme \eqref{eq:em_sde_naive} as follows:  first, for a fixed value of $T_f$ we ensure that the sum of the rescaled timesteps $g(\widetilde{X}_n)h$ taken along the trajectory reach the same final time $T_f$, rather than fixing the number of steps, where interpolation is needed to obtain the values of the sample at the precise time $T_f$. In fact, the direct time rescaled SDE \eqref{eq:naive_time_rescaled} targets 
\begin{equation} \label{eq:Gibbs_eq_dtr}
    \hat{\rho}(x) = \frac{1}{\hat{Z}} \exp \left(-\beta V(x) - \log(g(x)) \right).
\end{equation}
% One can use this fact to use importance sampling to approximate $\rho$ using samples \eqref{eq:naive_time_rescaled}. Here, we use the basic identity
% $$
% \int Q(x) \rho(x) dx = \frac{\hat{Z}}{Z} \int Q(x) g(x) \hat{\rho}(x) dx = \frac{\int Q(x) g(x) \hat{\rho}(x) dx}{\int g(x') \hat{\rho}(x') dx'},
% $$
% that holds for any appropriate function $Q: \Omega \rightarrow \mathbb{R}$.
% We obtain an estimator in the following way
% \begin{align*}
%     \int Q(x) \rho(x) dx &= \lim_{T'_f \rightarrow \infty} \frac{1}{T'_f} \int_0^{T'_f} Q(\tilde{x}(t)) dt \\ &\approx  \frac{1}{T_f} \int_0^{T_f} Q(\tilde{x}(t)) dt \approx \frac{\sum_{n=0}^N Q(\tilde{X}_n)g(\tilde{X}_n)h}{\sum_{n'=0}^N g(\tilde{X}_{n'})h},
% \end{align*}
% ≈where the first equality assumes an ergodic theorem for the time rescaled SDE, the  first `$\approx$' is due to choosing a large $T_f > 0$ instead of evaluating the limit, and the second `$\approx$' is due to the sample approximation, where we choose  $T_f$  such that $T_f= \sum_{n'=0}^N g(x_{n'})h$. Fixing a $T_f$ in this way corresponds to `reaching the same final time' mentioned above. Whilst this importance sampling approximation avoids computing the gradient of $g$, it is not clear how to choose a $g$ that both leads to an efficient sampling from $\hat{\rho}$ and an efficient importance sampling approximation (i.e., $g$ having a small $\chi^2$ divergence, see \cite{Agapiou}). We leave this problem open for future investigation and concentrate on the IP-transformed SDE \eqref{eq:transformed_overdamped_sde}.}
One can interpret this device as a form of importance sampling to approximate $\rho$ using samples \eqref{eq:naive_time_rescaled}. Here, we use the basic identity
$$
\int Q(x) \rho(x) dx = \frac{\hat{Z}}{Z} \int Q(x) g(x) \hat{\rho}(x) dx = \frac{\int Q(x) g(x) \hat{\rho}(x) dx}{\int g(x') \hat{\rho}(x') dx'},
$$
that holds for any appropriate function $Q: \Omega \rightarrow \mathbb{R}$.
We obtain an estimator in the following way
\begin{align*}
    \int Q(x) \rho(x) dx &= \lim_{T'_f \rightarrow \infty} \frac{1}{T'_f} \int_0^{T'_f} Q(\tilde{x}(t)) dt \\ &\approx  \frac{1}{T_f} \int_0^{T_f} Q(\tilde{x}(t)) dt \approx \frac{\sum_{n=0}^N Q(\tilde{X}_n)g(\tilde{X}_n)h}{\sum_{n'=0}^N g(\tilde{X}_{n'})h},
\end{align*}
where the first equality assumes an ergodic theorem for the time rescaled SDE, the  first approximation relation is due to choosing a large $T_f > 0$ instead of evaluating the limit, and the second approximation is due to the sampling error, where we define  $T_f$  such that $T_f= \sum_{n'=0}^N g(x_{n'})h$. Fixing a $T_f$ in this way corresponds to `reaching the same final time' mentioned above. Whilst this importance sampling approximation avoids computing the gradient of $g$, it is not clear in general how to choose a $g$ that both leads to an efficient sampling from $\hat{\rho}$ and an efficient importance sampling approximation (i.e., $g$ having a small $\chi^2$ divergence, see \cite{Agapiou}). We leave this problem for future investigation and concentrate in the sequel on the IP-transformed SDE \eqref{eq:transformed_overdamped_sde}.

%\newreview{One could potentially use autonormaliced importance sampling and obtain weighted samples with which one should be able to approximate the intended target distribution \eqref{eq:Gibbs_eq} from \eqref{eq:Gibbs_eq_dtr}. It is not clear to us how efficient this would be -- especially in high dimensions. We note that $\mathbb{E}(x) = \int x \rho(x) dx=\frac{\hat{Z}}{Z} \int x g(x) \hat{\rho}(x) dx$.}
% By adding and subtracting $\beta^{-1} \nabla g(x) d \tau$, then noticing that $\nabla \log g(x)= \frac{\nabla g(x)}{g(x)}$ we can write Equation \eqref{eq:naive_time_rescaled} as  
% \begin{equation} \label{eq:IP_trans_dr}
% d \tilde{x} =  -\nabla U(\tilde{x}) g(\tilde{x}) d \tau + \beta^{-1} \nabla g(x) d \tau+ \sqrt{2 \beta^{-1} g(\tilde{x})} d \widetilde{W}(\tau),
% \end{equation} 
% where $U(x) = V(x) + \beta^{-1}  \log(g(x))$.

% Indeed, as $\frac{1}{T}\int_0^T f(x(t)) dt= \frac{1}{T(\tau)}\int_0^{T(\tau)} f(\tilde{x}(\tau)) g(\tilde{x}(\tau)) d \tau$ approximated by $\frac{1}{N}\sum_{n=1}^N  f(\tilde{X}_n) g(\tilde{X}_n) $. Using this as the finite time sampling procedure, we then let $T\rightarrow \infty$.
%Nevertheless, this method allows us to recover samples from the evolving distribution of \eqref{eq:em_sde}, whereas the evolving distribution of the transformed system \eqref{eq:em_tr_sde} is different.
% \todo[inline]{Jonas what do you think? 
% Jonas: I would write that we could approximate $x(T)$ for large $T$ with this method, which we could use to approximate the invariant measure.}

\section{Design of monitor function}
\label{sec:monitor_func}
A smart choice of the monitor function $g$ slows down the process and increases the number of samples in regions of high solution change and accelerates it elsewhere. In this section, we give some intuition regarding the design of a ``good" monitor function, as well as criteria that  guarantee the well-posedness of the SDE.

\subsection{Stepsize bounds}
In the example with the potential \eqref{eq:pot_spring}, the function $g$ decreases the stepsize where the potential is  steep and increases it elsewhere. This allows us to gain accuracy, as it encourages trajectories to approach the equilibrium values in the numerical integration. Different designs of the monitor function $g$ are available.

The simplest choice (if not the most practical) is to base the monitor function on $G(x)=\|V'(x)\|$, where here and elsewhere in this work $\|\cdot\|$ denotes the $2$-norm. %\footnote{The notation $\|\cdot\|$ stands for the $2$-norm.}. 
But simply choosing $g(x) = 1/G(x)$ is unreliable since the forces may vanish entirely or become very large, causing the stepsize either to grow precipitously or decay to zero. Instead, it is desirable to introduce a heuristic, as in \cite{HuangLeimkuhler1997}, which controls the adapted stepsize to lie within a fixed range.

Define a smooth, monotone function $\psi:\mathbb{R}_+\rightarrow [m,M]$, for $m<M$ such that:
\[
\lim_{u\rightarrow \infty} \psi(u) = m, \qquad
\lim_{u\rightarrow 0} \psi(u) = M.
\]
For this choice the composite function $g(x) = \psi(1/G(x))$ has a defined range and has the property that $g(x)$ is minimized in the limit of large $G(x)$, and is maximal when $G(x)\approx 0$. We used a similar choice to the one suggested in \cite{HuangLeimkuhler1997},
\begin{align} \label{eq:monitor_phi}
\psi(x) = \frac{\sqrt{1+m^2 r x^{2\alpha}}}{\frac{\sqrt{1+m^2 r x^{2\alpha}}}{M}+ \sqrt{r x^{2\alpha}}} 
\end{align}
% and 
% \begin{align} \label{eq:monitor_nabla_phi}
%  \psi'(x) = \frac{r^2 2t x^{4t-1}}{\sqrt{r x^{2t}} \sqrt{m^2 r x^{2t}+1}(\sqrt{m^2 r x^{2t}+1}/M+\sqrt{rx^{2t}})^2}
% \end{align}
where we added the dependence on the power $\alpha \in \mathbb{N}_+$ and the parameter $r \in \mathbb{R}_+$, which has more rapid decay for larger $r$, see Figure \ref{fig:heuristics}. The latter  reaches $\frac{mM}{m+M}$ at $u=0$, but if $m\ll M$, this difference is inconsequential.
\begin{figure}[htp]
\label{fig:heuristics}
\begin{center}
\includegraphics[width=5in]{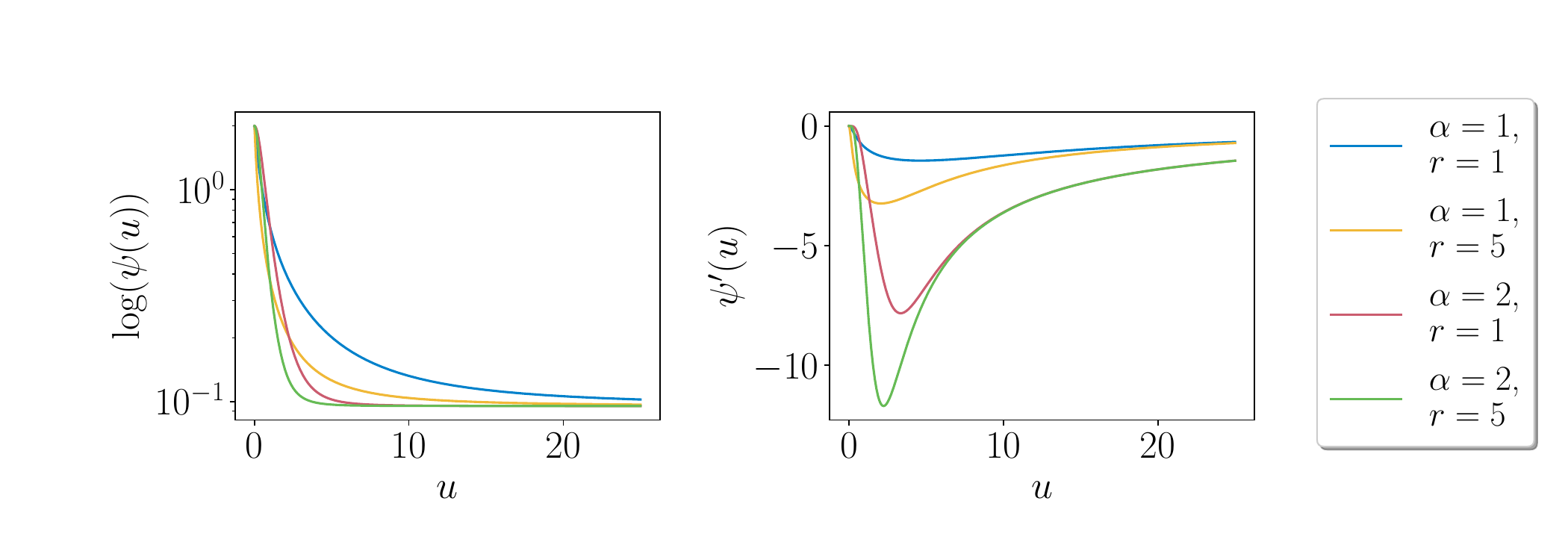}
\caption{Comparison of the monitor function for different values of the parameter $r$ and $t$.}
\end{center}    
\end{figure}
By choosing the coefficients $m,M$, we can restrict the effective stepsize $g(X_n)h$ to a range between a maximum and minimum stepsize, with the lower and upper bounds given by $m h$ and $M h$ respectively. The tuning of the parameters $\alpha$ and $r$ is problem-dependent.
Referring to the example in Section \ref{sec:running examples}, we define $g_1(x)=\psi(V'(x))$,  $g_2(x)=\psi(\omega^2(x))$ and  $g_3(x)=\psi(\omega(x))$, as shown in Figure \ref{fig:function_g}.
\begin{figure}[htp]
\hfill
\subfigure[$g_1(x)=\psi(V'(x))$.]{\includegraphics[width=4cm]{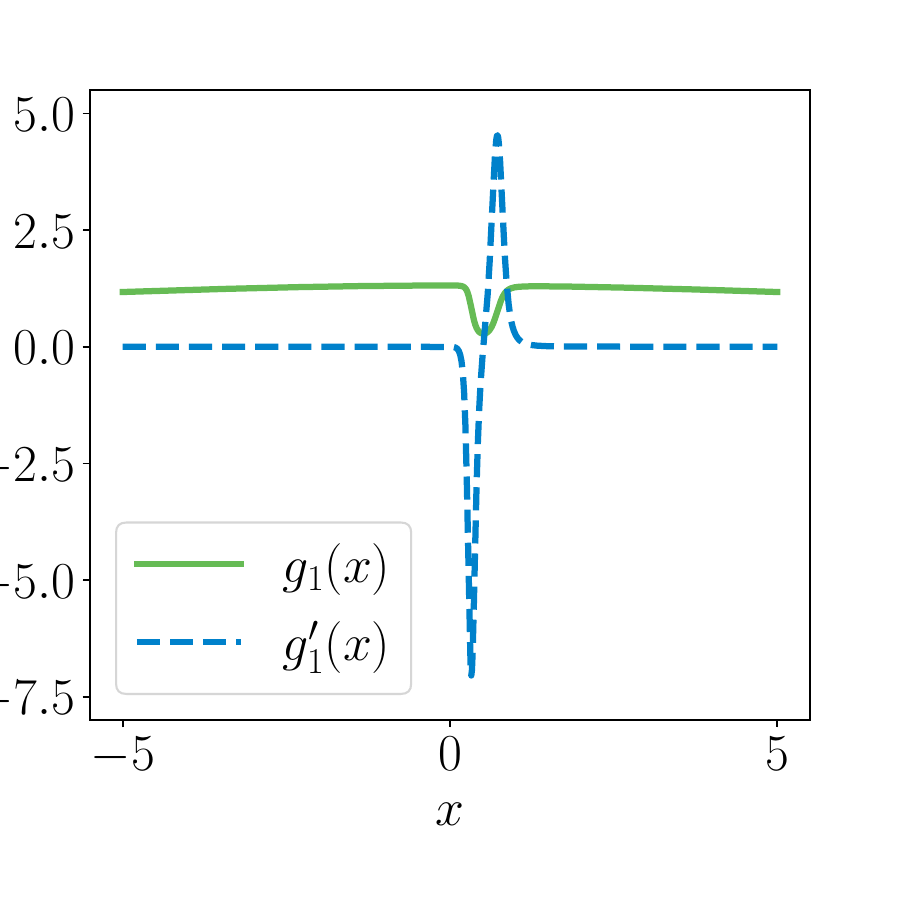}}
\hfill
\subfigure[$g_2(x)=\psi(\omega^2(x))$ .]{\includegraphics[width=4cm]{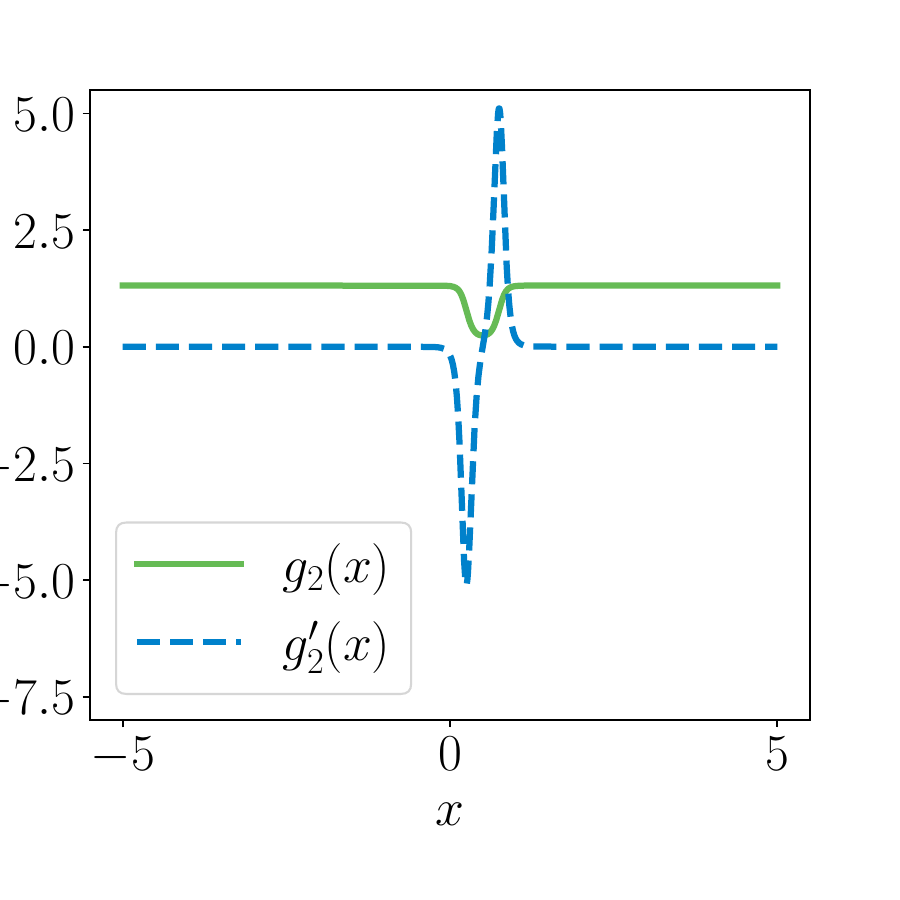}}
\hfill
\subfigure[$g_3(x)=\psi(\omega(x))$.]{\includegraphics[width=4cm]{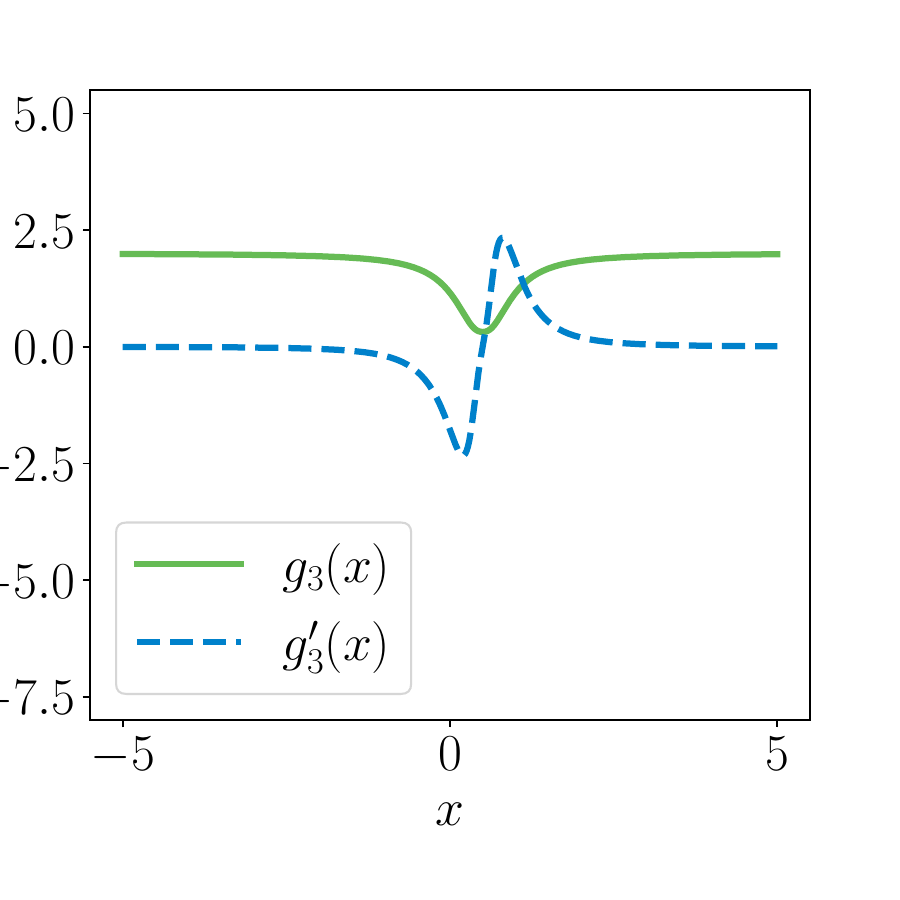}}
\hfill
    \caption{Different designs of the function $g$, and the corresponding function $g'$.}
    \label{fig:function_g}
\end{figure}
In Figure \ref{fig:function_g}, we plot the functions $g_1$, $g_2$ and $g_3$, respectively from left to right. We observed that the numerical integration remained stable for larger stepsize with the function $g_3$. This is due to the smaller values taken by $g_3'$, allowing us to obtain comparable results with a large range of stepsizes and providing 
informative accuracy curves (presented in Section \ref{sec:numerical_experiments}). 
% This is explained by the introduction of the term $ \beta^{-1} \nabla g$ in \eqref{eq:transformed_overdamped_sde} which preserves the invariant measure but alters the drift term. 
More rapid changes of the function $g$ will induce higher values of $\nabla g$, which means that smaller steps will be required for the numerical integration. There is a balance to strike; a highly adaptive monitor function $g$ can provide better efficiency but will induce a large correction term that might impair the stability of the SDE. Intuitively, 
it may be argued that we are attempting to ease the integration of the SDE with drift terms having a large Lipschitz constant by introducing a taming multiplicative factor in the form of a monitor function. 

With a good design of monitor function, the numerical integrators reach their asymptotic state with lower computational effort, which is equivalent to a larger average value taken by the monitor function, or, mean stepsize. With the original systems \eqref{eq:full_langevin_1d} and \eqref{eq:original_sde}, larger stepsizes do not allow the numerical integration to weakly approach the target distribution.
% However, as mentioned previously, the taming factor implies the introduction of a correction term $\nabla g(x)\beta^{-1}$ which is required to have a reasonable Lipschitz constant, to not counter balance the effect of the taming factor in the numerical integration. 
In the general non-scalar case, without detailed knowledge of the problem, a simple heuristic monitor function could be taken to be $\psi(\|\nabla V(x)\|)$, but there are often better choices.  For example, if the computation of the potential is very expensive, one may use a simplified, more computable measure of its magnitude. An example arises in Bayesian parameter inference when the prior distribution is steeply confined but the model likelihood is smooth, typically more expensive to compute, see Section \ref{sec:bayesian}. An other example arises in $N$-body simulation where interactions with distant bodies contribute little to the overall potential or gradient norm; these long-range terms can be neglected in calculating the monitor function, saving considerable computation. When the problem is too complex, or the dynamics is not accessible, the monitor function can be estimated on the fly (using history) or one may be able to use the Hessian as in Newton's method \cite{NocedalJorge2006}. %and one would avoid computing the extra correction terms by employing the direct-time rescaled dynamics \eqref{eq:naive_time_rescaled} as suggested as in the last paragraph of Section \ref{sec:running examples}.}

\subsection{Well-posedness of the IP-transformed SDE}
\label{sec:wellposedness}
%Equally, we aim at designing a function $g$ to guarantee the existence and uniqueness of a strong, and thus weak solution, to hold. 
From \cite[Theorem~5.2.1]{Oksendal2003} an SDE with drift term $b:[0,\infty[ \times \mathbb{R}^d \to \mathbb{R}^d$ and diffusion $b:[0,\infty[ \times \mathbb{R}^d \to \mathbb{R}^d$ has a unique solution $x$ that is continuous with respect to time under global Lipschitz conditions
\begin{subequations}\label{eq:cdts_th_oksen_2}
    \begin{align}\label{eq:cdts_th_oksen_2_1}
    \|b(t,x)-b(t,y)\|&\leq  C_1 \|x-y\|;\ x,y \in \mathbb{R}^d \\
    \label{eq:cdts_th_oksen_2_2}
    \|\sigma(t,x)-\sigma(t,y)\|&\leq C_2 \|x-y\|; \ x,y \in \mathbb{R}^d.
\end{align}
\end{subequations}
% which implies the linear growth conditions
% \begin{subequations} \label{eq:cdts_th_oksen_1}
% \begin{align} \label{eq:cdts_th_oksen_1_1}
%     \|b(t,x)\|&\leq D_1(1+\|x\|); \ x \in \mathbb{R}^d\\
%     \label{eq:cdts_th_oksen_1_2}
%     \|\sigma(t,x)\|&\leq D_2(1+\|x\|); \ x \in \mathbb{R}^d
% \end{align}
% \end{subequations} \todo{Do we need to mention the linear growth conditions? They are nice, but the reminder is maybe not necessary?}
for some constant $C_1,C_2$. We now derive criteria that may be imposed on the monitor function to ensure that these conditions are satisfied for the IP-transformed SDE. 

We make the following assumptions on the potential.
\begin{assumption} \label{ass:pot_1}
The potential $V$ is smooth and confining, hence the Gibbs distribution \eqref{eq:Gibbs_eq} is the unique invariant distribution of the SDE \eqref{eq:original_sde}.
\end{assumption}
\begin{assumption} \label{ass:pot_2}
The potential is a measurable function satisfying
   \begin{align}
\|\nabla V(y) - \nabla V (x)\|\leq D \|x-y\|; \ x,y \in \mathbb{R}^d
\end{align}
for some constant D, which implies
% \bjltodo{Either use $\|..\|$ everywhere or just drop the subscript everywhere, but be careful of mixing.} DONE
\begin{align} \label{eq:ass_pot_2_lineargrowth}
    \|\nabla V(x)\|\leq C(1 + \|x\|);\ x \in \mathbb{R}^d
\end{align}
for some constant $C$.
\end{assumption} 
% Since such monitor functions are necessarily heuristic in nature, it is useful to limit the scope of variation to a fixed interval by introducing a filter to improve robustness. \todo[inline]{is our function g a filter? What do we mean by improving robustness?}

% Specifically, given a monitor function $g$ we can define a new filtered version by
% \[
% \bar{g}(x) = M \left( \frac{1}{M}+\frac{1}{\sqrt{|\omega(\tilde{x})^2|+m^2}}\right)^{-1}.
% \]

% The function  $g(\tilde{x}) \geq 0$ and $g \in [m,M]$ for all $x:[0, \infty) \rightarrow \mathbb{R}^d$ where $m=\frac{h_{\min}}{\Delta t}$ and $M=\frac{\Delta t_{\max}}{\Delta t}$. An example of such a function is given on the plot on the right of Figure \ref{fig:spring_potential}. We have 
% \subsection{Necessary Criterions of the rescaling function}
The IP-transformed SDE \eqref{eq:transformed_overdamped_sde} has a drift term $\tilde{b}(x)=-\nabla V(x) g(x) + \beta^{-1} \nabla g(x)$ and a diffusion term $\tilde{\sigma}(x) = \sqrt{2 \beta^{-1} g(x)}$. 
% \bjltodo{Personally, I would make the following a little more compact and higher level. I think it is not necessary to state things like the composition of Lipschitz functions is Lipschitz - the audience will know this sort of thing well enough.  So I would tighten up where possible.} AGREED - DONE
%We provide criteria on $g$ to ensure that the drift $\tilde{b}$ and the diffusion $\tilde{\sigma}$ respect the conditions \eqref{eq:cdts_th_oksen_2}. 
The monitor function $g$ is assumed to be differentiable, and we enforce a global Lipschitz condition on this function and its gradient. 
\begin{criterion} \label{gLipschitz}
   There exists a constant $C_1>0$ such that
    \begin{equation}
        |g(x)-g(y)| \leq C_1 \|x-y\|; \ x,\ y \in \mathbb{R}^d.
    \end{equation}
\end{criterion}
\begin{criterion} \label{nablagLipschitz}
The function $\nabla g$ exists and there exists a constant $C_2>0$ such that
    \begin{equation}
        \|\nabla g(x)- \nabla g(y)\|\leq C_2 \|x-y\|; \ x,y \in \mathbb{R}^d.
    \end{equation}
\end{criterion}
In practice, the function $g$ is built to re-scale the value of the stepsize and, thus, is bounded. 
\begin{criterion} \label{gbounded}
The function $g:\mathbb{R}^d \to \mathbb{R}$ is such that there exist two strictly positive constants $M_1,\ m_1$, with $m_1<M_1$, such that
\begin{equation}
    M_1>g(x)>m_1 ; \ x \in \mathbb{R}^d.
\end{equation}
\end{criterion}
We note that Criterion \ref{gLipschitz} implies that the drift term $\sqrt{2 \beta^{-1} g }$ is Lipschitz as $\sqrt{x}$ is Lipschitz on $(m_1, \infty)$ and Criterion \ref{gbounded} ensures that $g(x) >m_1;\ x \in \mathbb{R}^d$. Thus, the diffusion term $\tilde{\sigma}(x) = \sqrt{2 \beta^{-1} g(x)}$ satisfies \eqref{eq:cdts_th_oksen_2_2}. 
Finally, we require that the product $\nabla V g $ is Lipschitz. The function $g$ is smooth and bounded, and the potential is also smooth and confining. The function $g$ may be defined to be constant outside a 
suitable ball, so our final criterion holds by design.
\begin{criterion} 
\label{VgLipschitz}
There exists a constant $C_3>0$ such that
    \begin{equation}
        \| \nabla V(x) g(x)- \nabla V(y) g(y)\|\leq C_3 \|x-y\|; \ x,y \in \mathbb{R}^d.
    \end{equation}
\end{criterion}
Then, by the triangle inequality, the Criteria \ref{gLipschitz} and \ref{VgLipschitz} ensure that the drift term $\tilde{b}(x)=-\nabla V(x) g(x) + \beta^{-1} \nabla g(x)$ respects \eqref{eq:cdts_th_oksen_2_1}. The desired proposition follows.
% More generally, one expects the drift and diffusion coefficient to be moderately smooth \cite[Sect. 1.7]{kloedenPlaten1999}. 
\begin{proposition} \label{prop:uniquesol}
If $b(x) \equiv - \nabla V(x)$ in the SDE \eqref{eq:original_sde} 
%\begin{align*}
%    d x = - \nabla V(x) d t + \sqrt{2 \beta^{-1}} d W
%\end{align*}
satisfies  condition \eqref{eq:cdts_th_oksen_2_1}  
%and  \eqref{eq:cdts_th_oksen_2_2}, 
then, under  Assumptions \ref{ass:pot_1} and \ref{ass:pot_2} and Criteria \ref{gLipschitz} - \ref{VgLipschitz}, the IP-transformed SDE \eqref{eq:transformed_overdamped_sde} 
%\begin{align} 
%d x & = -g(x)\nabla V(x) d t  + \beta^{-1}  \nabla g(x) d t + %\sqrt{2\beta^{-1} g(x)} d W
%\end{align}
satisfies 
 condition \eqref{eq:cdts_th_oksen_2_1} and  \eqref{eq:cdts_th_oksen_2_2}
and hence has a unique solution that is continuous with respect to time.
\end{proposition}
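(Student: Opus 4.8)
The plan is to verify directly the two global Lipschitz bounds \eqref{eq:cdts_th_oksen_2_1}--\eqref{eq:cdts_th_oksen_2_2} for the coefficients of the IP transformed SDE \eqref{eq:transformed_overdamped_sde}, namely the drift $\tilde b(x) = -\nabla V(x)\,g(x) + \beta^{-1}\nabla g(x)$ and the diffusion $\tilde\sigma(x) = \sqrt{2\beta^{-1}g(x)}$, and then to invoke \cite[Theorem~5.2.1]{Oksendal2003}. Since both coefficients are autonomous, the time variable in \eqref{eq:cdts_th_oksen_2} plays no role, so it suffices to control the $x$-dependence. The argument is essentially an assembly of the facts already isolated in Criteria \ref{gLipschitz}--\ref{VgLipschitz} and Assumptions \ref{ass:pot_1}--\ref{ass:pot_2}, so the bulk of the work is bookkeeping rather than genuinely new estimates.

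First I would treat the diffusion term. The real map $s\mapsto\sqrt s$ has derivative $\tfrac{1}{2\sqrt s}$, which is bounded by $\tfrac{1}{2\sqrt{m_1}}$ on the interval $(m_1,\infty)$; by Criterion \ref{gbounded} the function $g$ takes values only in this interval, so $s\mapsto\sqrt s$ is Lipschitz along the range of $g$ with constant $\tfrac{1}{2\sqrt{m_1}}$. Composing this with the Lipschitz bound for $g$ from Criterion \ref{gLipschitz} gives
\[
|\tilde\sigma(x)-\tilde\sigma(y)| = \sqrt{2\beta^{-1}}\,\bigl|\sqrt{g(x)}-\sqrt{g(y)}\bigr| \le \frac{\sqrt{2\beta^{-1}}}{2\sqrt{m_1}}\,|g(x)-g(y)| \le \frac{\sqrt{2\beta^{-1}}\,C_1}{2\sqrt{m_1}}\,\|x-y\|,
\]
which establishes \eqref{eq:cdts_th_oksen_2_2}. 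I would stress that the strict positivity in Criterion \ref{gbounded} is exactly what makes this step work: $s\mapsto\sqrt s$ is only locally Lipschitz away from the origin, and without a positive lower bound on $g$ the diffusion need not be Lipschitz at all.

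Next I would handle the drift $\tilde b = -\nabla V\,g + \beta^{-1}\nabla g$ by splitting it into its two summands. The product term $\nabla V\,g$ is Lipschitz with constant $C_3$ by Criterion \ref{VgLipschitz}; it is worth noting that this is a genuine extra hypothesis and not a consequence of $\nabla V$ and $g$ being separately Lipschitz, since a product of Lipschitz functions need not be Lipschitz, but that it does hold by design once $g$ is taken constant outside a ball, because on the ball all quantities are smooth with bounded derivatives while outside the ball $\nabla V\,g$ is a constant multiple of $\nabla V$, which is Lipschitz by Assumption \ref{ass:pot_2}. The correction term $\beta^{-1}\nabla g$ is Lipschitz with constant $\beta^{-1}C_2$ by Criterion \ref{nablagLipschitz}. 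The triangle inequality then yields $\|\tilde b(x)-\tilde b(y)\|\le(C_3+\beta^{-1}C_2)\|x-y\|$, which is \eqref{eq:cdts_th_oksen_2_1}.

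Finally I would note that global Lipschitz continuity of an autonomous coefficient $c$, together with finiteness of $c(0)$, gives the linear growth bound $\|c(x)\|\le\|c(0)\|+L\|x\|\le\max(\|c(0)\|,L)(1+\|x\|)$; here $\tilde b(0)$ and $\tilde\sigma(0)$ are finite because $V$ and $g$ are smooth and $g$ is bounded away from $0$. Thus both the Lipschitz and the linear growth hypotheses of \cite[Theorem~5.2.1]{Oksendal2003} are satisfied, and the theorem provides a pathwise unique solution of \eqref{eq:transformed_overdamped_sde} with sample paths continuous in time, which is the assertion of the proposition. I do not expect any serious obstacle: the only mildly delicate point is the square-root estimate for the diffusion, and the care needed there is already localised in Criteria \ref{gLipschitz} and \ref{gbounded}.
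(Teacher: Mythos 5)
Your argument is correct and follows essentially the same route as the paper: you verify the global Lipschitz condition \eqref{eq:cdts_th_oksen_2_2} for $\tilde\sigma=\sqrt{2\beta^{-1}g}$ via the lower bound $m_1$ from Criterion \ref{gbounded} together with Criterion \ref{gLipschitz}, split the drift into $\nabla V\,g$ (Criterion \ref{VgLipschitz}) and $\beta^{-1}\nabla g$ (Criterion \ref{nablagLipschitz}) with the triangle inequality for \eqref{eq:cdts_th_oksen_2_1}, and then invoke \cite[Theorem~5.2.1]{Oksendal2003}. Your added remarks (explicit constants, the linear-growth bound deduced from global Lipschitzness, and correctly attributing the $\nabla g$ part to Criterion \ref{nablagLipschitz}) only make the same argument slightly more complete.
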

\section{Invariant distribution and geometric ergodicity}
\label{sec:correct_timestepping} 
In Section \ref{sec:running examples}, our proposed correction in \eqref{eq:transformed_overdamped_sde} is necessary to ensure stationarity with respect to the correct distribution and in Section \ref{sec:wellposedness}, we showed there exist a unique time continuous process for equation \eqref{eq:transformed_overdamped_sde}. In the following section, using uniform ellipticity of the Fokker-Planck equation, we show that the Gibbs distribution is the unique invariant distribution to equation \eqref{eq:transformed_overdamped_sde}. Additionally, we introduce the transformed version of the underdamped Langevin equations \eqref{eq:full_langevin_1d} and provide results in this case which correspond to those obtained for overdamped systems. The underdamped system is, in principle, more challenging but we outline a simpler approach here that avoids invoking a complicated analysis.
%  One  approach to this issue could be obtained by modifying the result of \cite{LeSaDa2017} for Langevin dynamics with nongradient forces and configuration-dependent damping and noise,

% and show that it also has the correct stationary distribution.
% In this section, we introduce the necessary assumptions for the IP transformed SDE to have the appropriate invariant measure. We introduce the IP transformed SDE for both the underdamped and overdamped case and show that it has the invariant distribution of the Langevin dynamics system of SDEs. 
% To guarantee convergence to the invariant distribution, an additional correction term is added to the drift. 
% The overdamped Langevin dynamics can be obtained through asymptotic methods by letting the mass going to zero or the friction to infinity in the underdamped system \eqref{eq:full_langevin_1d} (see \cite[Sect. 6.5.1]{Pavliotis2014}
% The transformed equation that preserves the invariant distribution is given by \cite[Eq. 2.6]{Donev2018}. 
% \begin{align}
% d \rm y & = -g(y)\nabla V(y) d t  + \beta^{-1}  \nabla g(y) d t + \sqrt{2\beta^{-1} g(y)} d W.
% \end{align}
\subsection{The overdamped Langevin dynamics}
% In this section, we prove that the transformed system of SDEs with the overdamped Langevin dynamics has the target distribution as the invariant distribution. We note that this doesn't imply that the distribution at finite time, or the evolving distribution is similar. In practice, we observe that the transient distribution of the transformed system is different than the of the non transformed system.
We follow Theorem 4.1 \cite[p~89]{Pavliotis2014}, which applies to the time homogeneous process $x$ in equation \eqref{eq:original_sde}
% \begin{equation*}
%     d x = b(x) dt + \sigma(x) dW;\ x \in \mathbb{R}^d
% \end{equation*}
% \todo{I could make this simpler by simply referring to equation \eqref{eq:original_sde}, but then I loose generality}
where the diffusion matrix is the product of the diffusion term $\sigma(x)=\sqrt{2 \tau}$, i.e $\Sigma(x)=\sigma(x) \sigma(x)^T: \mathbb{R}^{d\times d} \to \mathbb{R}^{d\times d}$, with an initial condition $X_0$ with probability density function $\rho_0(x)$. The solution $u(x,t) \in C^{2,1}(\mathbb{R}^d \times \mathbb{R}^+)$ is the solution to an initial value problem with a uniform elliptic backward Kolmogorov equation of the form
\begin{equation*}
    \partial_t u =  \tfrac{1}{2} \sum_{i,j=1}^{d}  \Sigma_{ij}(x) \partial^2_{x_i,x_j} u + \tilde{b}(x) \cdot \nabla u + c(x) u.
\end{equation*} If we have uniform ellipticity on the diffusion, there exists a constant $\alpha >0$  such that 
\begin{equation} \label{eq:the4elli}
    \langle \xi, \Sigma(x) \xi \rangle \geq \alpha \| \xi \|^2,\ \forall \xi \in \mathbb{R}^d,
\end{equation} uniformly in $x \in \mathbb{R}^d$ which is trivial for \eqref{eq:original_sde}. If uniform ellipticity holds and if the coefficients $\tilde{b}(x)=\nabla V(x)+ \nabla \cdot \Sigma(x)$ and $c(x)= \nabla \cdot \nabla V(x) + \frac{1}{2} \nabla \cdot (\nabla \cdot \Sigma)(x)$ are smooth and satisfy the growth conditions
\begin{subequations}\label{eq:the4grwthcdts}
\begin{align}
    &  \label{eq:the41_a} \| \Sigma(x) \|\leq B_1,\\
   & \label{eq:the41_b} \| \tilde{b}(x) \|\leq B_2 (1+\|x\|),\\
    & \label{eq:the41_c} \| c(x) \|\leq B_3 (1+\|x\|^2),
    \end{align}
\end{subequations}
then there exists a unique solution to the Cauchy problem for the Fokker Planck equation. For equation \eqref{eq:original_sde}, conditions \eqref{eq:the41_a} follows from the constant diffusion term and \eqref{eq:the41_b} is equivalent to the linear growth assumption in equation \eqref{eq:ass_pot_2_lineargrowth}. The following assumption is made on the potential to ensure \eqref{eq:the41_c}.
\begin{assumption}\label{ass:pot_3}
    There exist a constant $C_4>0$ such that
    \begin{equation}
        \| \sum_{i=1}^d \partial_{x_i}^2 V(x) \|\leq C_4(1+\|x\|^2); x \in \mathbb{R}^d.
    \end{equation}
\end{assumption}
% is guaranteed by Criterion \ref{gbounded} as $\Sigma(x) = \| \sqrt{2 \beta^{-1} g(x)} \mathrm{I}$ and $g$ is bounded away from zero, ensuring uniform ellipticity of the associated PDE 
We show that Theorem~4.1 holds for the transformed equation \eqref{eq:transformed_overdamped_sde}, given two additional criterions on the monitor function.
\begin{criterion} \label{nablagbounded}
The function $\nabla g:\mathbb{R}^d \to \mathbb{R}^d$ is such that there exist two strictly positive constants $M_2, M_3$ such that for all $x$
\begin{equation}
    M_2>\|\nabla g(x)\|; \ x \in \mathbb{R}^d,
\end{equation}
and for $j=1, \dots, d$, we have
\begin{equation}
    M_3>|\partial_j g(x)|; \ x \in \mathbb{R}^d.
\end{equation}
\end{criterion}
and
\begin{criterion} \label{ddgbounded}
    The function $ g:\mathbb{R}^d \to \mathbb{R}^d$ is twice differentiable and has bounded second derivative. For $i,j=1, \dots, d$, we have
    \begin{equation}
    M_4>|\partial^2_{x_i} g(x)|; \ x \in \mathbb{R}^d.
\end{equation}
\end{criterion}
we can 
\begin{theorem}
If the SDE \eqref{eq:original_sde} satisfies the Assumptions \ref{ass:pot_1} to \ref{ass:pot_3} and provided that the monitor function follows Criterions \ref{gLipschitz} to \ref{ddgbounded}, the IP-transformed SDE \eqref{eq:transformed_overdamped_sde} 
\begin{align*}
d x & = -g(x)\nabla V(x) d t  + \beta^{-1}  \nabla g(x) d t + \sqrt{2\beta^{-1} g(x)} d W 
\end{align*}
has the unique invariant distribution $\rhoeq$, as given in \eqref{eq:Gibbs_eq}.
%\begin{align*} 
%\rhoeq=\frac{1}{Z}\exp(-\beta V(x)),
%\end{align*}
%with  normalising factor \eqref{eq:Gibbs_eq_Z}
%\begin{align*}
%Z = \int_{\mathbb{R}^d} \exp(-\beta V(x)) d x.
%\end{align*}
\end{theorem}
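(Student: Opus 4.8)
# Proof Proposal for the Theorem

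The plan is to apply Theorem 4.1 of \cite{Pavliotis2014} directly to the IP transformed SDE \eqref{eq:transformed_overdamped_sde}, which requires verifying two things: first, that the transformed SDE satisfies the structural hypotheses of that theorem (uniform ellipticity plus the growth conditions \eqref{eq:the4grwthcdts} on the transformed drift and zeroth-order coefficient), and second, that the Gibbs density $\rhoeq$ lies in the kernel of the Fokker--Planck operator $\tilde{\mathcal{L}}^*$ associated with \eqref{eq:transformed_overdamped_sde}. The theorem then gives existence and uniqueness of a solution to the Cauchy problem for the Fokker--Planck equation, which combined with the already-established stationarity of $\rhoeq$ yields that it is \emph{the} unique invariant distribution.

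First I would check uniform ellipticity: the diffusion matrix of \eqref{eq:transformed_overdamped_sde} is $\Sigma(x) = 2\beta^{-1} g(x) I$, so \eqref{eq:the4elli} holds with $\alpha = 2\beta^{-1} m_1 > 0$ by Criterion \ref{gbounded}. Next I would verify the growth conditions. For \eqref{eq:the41_a}, $\|\Sigma(x)\| = 2\beta^{-1} g(x) \le 2\beta^{-1} M_1$ by Criterion \ref{gbounded}. For \eqref{eq:the41_b}, the transformed drift is $\tilde b(x) = -g(x)\nabla V(x) + \beta^{-1}\nabla g(x)$; here $\|g(x)\nabla V(x)\| \le M_1 C(1+\|x\|)$ using Criterion \ref{gbounded} and the linear growth bound \eqref{eq:ass_pot_2_lineargrowth} from Assumption \ref{ass:pot_2}, while $\|\beta^{-1}\nabla g(x)\| \le \beta^{-1} M_2$ is bounded by Criterion \ref{nablagbounded} — so $\tilde b$ has linear growth. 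For \eqref{eq:the41_c}, the zeroth-order coefficient $c(x)$ for the transformed equation involves $\nabla\cdot\tilde b$ and $\nabla\cdot(\nabla\cdot\Sigma)$; expanding, this contains the terms $g(x)\sum_i \partial_{x_i}^2 V(x)$, $\nabla g(x)\cdot\nabla V(x)$, and second derivatives of $g$. These are controlled by: Criterion \ref{gbounded} with Assumption \ref{ass:pot_3} for the first (giving $(1+\|x\|^2)$ growth), Criterion \ref{nablagbounded} with \eqref{eq:ass_pot_2_lineargrowth} for the second (giving $(1+\|x\|)$ growth), and Criterion \ref{ddgbounded} for the third (bounded) — so $c$ satisfies the quadratic growth bound. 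Smoothness of the coefficients follows from smoothness of $V$ (Assumption \ref{ass:pot_1}) and differentiability of $g$ (Criteria \ref{nablagLipschitz}, \ref{ddgbounded}).

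It then remains to confirm that $\rhoeq \propto \exp(-\beta V)$ is stationary for \eqref{eq:transformed_overdamped_sde}, i.e.\ $\tilde{\mathcal{L}}^*\rhoeq = 0$. Writing out the Fokker--Planck operator,
\begin{equation*}
\tilde{\mathcal{L}}^*\rho = \nabla\cdot\bigl(g(x)\nabla V(x)\rho - \beta^{-1}\nabla g(x)\rho + \beta^{-1}\nabla(g(x)\rho)\bigr),
\end{equation*}
and expanding $\nabla(g\rho) = \rho\nabla g + g\nabla\rho$, the $\nabla g$ terms cancel, leaving $\tilde{\mathcal{L}}^*\rho = \nabla\cdot\bigl(g(x)(\nabla V(x)\rho + \beta^{-1}\nabla\rho)\bigr)$. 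Since $\nabla\rhoeq = -\beta\nabla V\,\rhoeq$ by the chain rule, the bracket vanishes, so $\tilde{\mathcal{L}}^*\rhoeq = 0$; moreover $\rhoeq$ is integrable and positive because $V$ is confining (Assumption \ref{ass:pot_1}), so it is a genuine probability density. Uniqueness then follows from Theorem 4.1 of \cite{Pavliotis2014}: any invariant density solves the stationary Fokker--Planck equation, and uniform ellipticity forces it to coincide with $\rhoeq$ up to normalization. The main obstacle I anticipate is purely bookkeeping — carefully expanding $c(x)$ for the transformed equation and matching each resulting term to the correct Criterion so that the quadratic growth bound \eqref{eq:the41_c} is genuinely obtained; one must be slightly careful that the cross term $\nabla g\cdot\nabla V$ only grows linearly (not quadratically), which is exactly why $\nabla g$ is required to be \emph{bounded} in Criterion \ref{nablagbounded} rather than merely of linear growth.
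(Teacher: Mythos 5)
Your proposal is correct and follows essentially the same route as the paper: you verify that $\rhoeq$ is annihilated by the Fokker--Planck operator of the IP transformed SDE via the relation $\nabla \rhoeq = -\beta \nabla V \rhoeq$, and you establish uniqueness by checking uniform ellipticity and the growth conditions \eqref{eq:the4grwthcdts} of Theorem 4.1 of Pavliotis using Criteria \ref{gbounded}--\ref{ddgbounded} together with Assumptions \ref{ass:pot_2} and \ref{ass:pot_3}, exactly as the paper does (you even state the diffusion matrix as $2\beta^{-1}g(x)I$, correcting a slip in the paper's $g^2$). The only cosmetic difference is the order of the two steps and minor bookkeeping in how the $\nabla\cdot\Sigma$ contribution is distributed between the drift and zeroth-order coefficient, which does not change the argument.
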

\begin{proof}
First, we demonstrate that the distribution \eqref{eq:Gibbs_eq} is invariant with respect to the SDE \eqref{eq:transformed_overdamped_sde}. It can be see by examining the adjoint operator of the infinitesimal generator associated with equation \eqref{eq:transformed_overdamped_sde}
% The initial value problem for the forward Kolmogorov equation associated with this SDE \eqref{eq:transformed_overdamped_sde} with a transition density $\rho(t,y) \in C^{2,1}(\mathbb{R}^+ \times \mathbb{R}^{d})$ is given by
% \begin{equation} 
%         \mathcal{L} \rho(t,y) = \left(- \nabla V(y) g(y) + \beta^{-1} \nabla g(y) \right) \cdot \nabla \rho(t,y)) + \beta^{-1} g(y) \nabla \cdot \nabla \rho(t,y),
% \end{equation}
% and the adjoint is given by: 
\begin{equation*} 
        \mathcal{L}^* \rho(t,x) =- \nabla \cdot \left( -\nabla V(x)g(x) + \beta^{-1} \nabla g(x) \right)\rho(t,x) + \beta^{-1} \Delta \left(g(x)  \rho(t,x)\right) .
\end{equation*}
The invariant distribution \eqref{eq:Gibbs_eq} is a stationary density if it is in the kernel of the adjoint operator $\mathcal{\tilde{L}}^*$. We have
% Replacing $\rho(t,x)$ by $\rhoeq$ in the adjoint PDE and distributing the gradient on the last term of the equation, we have
\begin{equation*}
\mathcal{L}^* \rhoeq =- \nabla \cdot \left( -\nabla V(x)g(x) + \beta^{-1} \nabla g(x) \right) \rhoeq + \beta^{-1} \nabla \cdot \left( \nabla g(x)  \rhoeq + g(x) \nabla \rhoeq \right),
\end{equation*}
and we use consecutively the chain rule and distribute the gradient to expand the last term, and since $g$ is scalar valued, we have
% \begin{multline*}
%   \mathcal{L}^* \rhoeq =- \nabla \cdot \left( -\nabla V(x)g(x) + \beta^{-1} \nabla g(x) \right) \rhoeq \\+ \beta^{-1} \nabla \cdot \left( \nabla g(x)  \rhoeq -g(x)\nabla V(x)\beta^{-1} \rhoeq \right).
% \end{multline*}
% The gradient is distributed in the last term such that
% \begin{multline*}
%   \mathcal{L}^* \rhoeq = \nabla \cdot \left(\nabla V(x)g(x) - \beta^{-1} \nabla g(x) \right) \rhoeq \\ + \beta^{-1}  \nabla \cdot \left(\nabla g(x)  \rhoeq \right) - \nabla \cdot \left(g(x) \nabla V(x) \rhoeq \right).
% \end{multline*}
% Since $g$ is scalar valued, we have 
\begin{equation*}
\begin{split}
      \mathcal{L}^* \rho(t,x) &= \nabla \cdot g(x) \nabla V(x) \rho(t,x) - \nabla \cdot g(x) \nabla V(x) \rho(t,x)   \\&\quad - \beta^{-1} \nabla \cdot \nabla g(x) \rho(t,x) + \beta^{-1}  \nabla \cdot \nabla g(x)  \rho(t,x)\\
  &= 0,
  \end{split}
\end{equation*}
and the probability distribution \eqref{eq:Gibbs_eq} is in the kernel of the adjoint operator $\mathcal{\tilde{L}}^*$ and is an invariant distribution of the SDE \eqref{eq:transformed_overdamped_sde}. Second, we prove uniqueness of the solution by showing that conditions \eqref{eq:the4elli} and \eqref{eq:the4grwthcdts} hold for the transformed dynamics\eqref{eq:transformed_overdamped_sde}. The diffusion matrix $\hat{\Sigma}(x)= 2 \beta^{-1} g^2(x) \mathrm{I}$ of the SDE \eqref{eq:transformed_overdamped_sde} respects \eqref{eq:the4elli} and \eqref{eq:the41_a} by Criterion \ref{gbounded}. We note that for the IP-transformed SDE \eqref{eq:transformed_overdamped_sde}, the condition \eqref{eq:the41_b} is applied on $\| \hat{b}(x) \| = \| \nabla V(x) g(x) +\beta^{-1} \nabla g(x) + \nabla \cdot \sqrt{2 \beta^{-1} g(x)} \mathrm{I} \|$. Using the triangle inequality and computing the divergence of the last term
\begin{equation*}
    \| \hat{b}(x) \|\leq \|\nabla V(x) g(x)\| + \| \beta^{-1} \nabla g(x) \| + \frac{\sqrt{2 \beta^{-1}}}{2} \bigg \|\frac{\nabla g(x)}{\sqrt{g(x)}}\bigg \|,
\end{equation*}
noting that $\nabla \cdot g(x) \mathrm{I} = \nabla g(x)$ and $\nabla \cdot (\sqrt{\beta^{-1} 2 g(x)} \mathrm{I} )= \frac{\sqrt{2 \beta^{-1}}}{2 \sqrt{g(x)}} \nabla g(x)$.
We can now use the Cauchy–Schwarz inequality on the first and last term, while bounding the second term using Criterion \ref{nablagbounded}. 
\begin{equation*}
\| \hat{b}(x) \|\leq \|\nabla V(x)\| |g(x)| + \beta^{-1} M_{2}+ \frac{\sqrt{2 \beta^{-1}}}{2} \bigg \|\frac{1}{\sqrt{g(x)}}\bigg \|  M_{2}.
\end{equation*}
The first term is bounded by linear growth through Assumption \ref{ass:pot_2}, and the monitor function is bounded by Criterion \ref{gbounded}. The last term is bounded by a constant through Criteria \ref{gbounded} and \ref{nablagbounded}, which implies that \eqref{eq:the41_b} holds.  
% We use Criterion \ref{gbounded} to bound the first term and by Assumption \ref{ass:pot_2}, the drift $\| b(x)\|=\| \nabla V(x) \|$ has linear growth. The last term is bounded by Criterion \ref{gbounded}, which implies linear growth for $\tilde{b}(x)$ as we have
% \begin{equation*}
% \| \hat{b}(x) \|\leq M_1 C (1+\|x\|) + M_{3}+ \frac{\sqrt{2 \beta^{-1}}}{2} m_1 M_2.
% \end{equation*}
Finally, the last condition \eqref{eq:the41_c} for $\| c(x) \|\leq M(1+\|x\|^2)$ is applied to $ \| c(x) \|= \| - \nabla \cdot \left( b(x) g(x) + \beta^{-1} \nabla g(x) \right) + \frac{1}{2} \nabla \cdot \nabla \cdot \sqrt{2 \beta^{-1} g(x)} \mathrm{I} \|$. Using the triangle inequality, each term can be bounded separately, and we have
\begin{align*}
    \| c(x) \|\leq & \bigg |\sum_{i=1}^d \left(-\partial_{x_i}^2 V(x) \right)g(x) + b(x) \partial_{x_i}g(x) \bigg | + \beta^{-1} \bigg | \sum_{i=1}^d \partial^2_{x_i} g(x) \bigg | \\ &+ \frac{\sqrt{2 \beta^{-1}}}{2} \bigg | \sum_{i=1}^d \left( \frac{\partial_{x_i}^2 g(x)}{\sqrt{g(x)}} - \frac{1}{2} \frac{(\partial_{x_i} g(x))^2 }{\sqrt{g(x)}^3}\right) \bigg |.
\end{align*}
The first term is bounded by respectively the linear and quadratic growth assumptions on the potential and its divergence (i.e Assumption \ref{ass:pot_2} and \ref{ass:pot_3}) and  Criteria \ref{gbounded} and \ref{nablagbounded} on the monitor function. The second term is bounded by a constant through Criterion \ref{nablagbounded}. The third term is bounded by a constant through the Criteria \ref{gbounded}, \ref{nablagbounded} and \ref{ddgbounded}. Thus, we have that condition \eqref{eq:the41_c} holds as 
\begin{equation*}
     \| c(x) \|\leq M (1+\|x\|^2)
\end{equation*}
in the transformed case. Thus all conditions are respected and the invariant distribution is unique by Theorem~4.1.
\end{proof}

\subsection{Underdamped Langevin dynamic}
% \todo[inline]{Des: Discuss this proof with Des}
The underdamped Langevin dynamics \eqref{eq:full_langevin_1d}, under Assumption \ref{ass:pot_1}, has the unique invariant distribution
\begin{align} \label{eq:invariant_under}
   \rhoequn(x,p) = \frac{1}{Z}\exp(-\beta H(x,p)), 
\end{align}
where $H$ is the Hamiltonian $H(x,p)  = |p|^2/2 + V(x)$ and the normalizing factor $Z$ is
\begin{align} \label{eq:invariant_under_z}
    Z= \int_{\mathbb{R}^{2d}} \exp(- \beta H(x,p)) d x d p,
\end{align}
 by \cite[Prop.~6.1]{Pavliotis2014}. We have the following relationships for the invariant distribution
\begin{subequations}
\begin{equation} \label{eq:nablaqrho}
\nabla_{x} \rhoequn =- \beta \nabla_{{x}} V({{x}}) \rhoequn, 
\end{equation}
\begin{equation}\label{eq:nablaprho}
\nabla_p \rhoequn =- \beta p \rhoequn.
\end{equation}
\end{subequations}

We introduce a time-rescaling together with a modification of the force which preserves the invariant measure to yield the IP-transformed underdamped dynamics. 
\begin{theorem}
When Assumptions \ref{ass:pot_1} and \ref{ass:pot_2} and Criteria \ref{gLipschitz} to \ref{VgLipschitz} hold, the system of SDEs
\begin{align} \label{eq:th_transformed_underdamped}
\begin{cases}
d x & = g(x) p d t,\\
d p & = -g(x)\nabla_x V(x) d t  + \beta^{-1} \nabla_x g(x) d t - \gamma g(x)pd t + \sqrt{2\gamma \beta^{-1} g(x)} d W(t)
\end{cases}
\end{align}
has the invariant distribution \eqref{eq:invariant_under} 
\begin{align}
   \rhoequn(x,p) = \frac{1}{Z}\exp(-\beta H(x,p)), 
\end{align}
with normalizing constant \eqref{eq:invariant_under_z} \begin{align} 
    Z= \int_{\mathbb{R}^{2d}} \exp(- \beta H(x,p)) d x d p.
\end{align}
\end{theorem}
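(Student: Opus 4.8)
The plan is to mirror, in the extended phase space $(x,p)\in\mathbb{R}^{2d}$, the two-part argument used for the overdamped case: first show that $\rhoequn$ lies in the kernel of the Fokker--Planck (adjoint) operator associated with \eqref{eq:th_transformed_underdamped}, and then invoke an existence/uniqueness result for the kinetic Fokker--Planck equation. First I would write down the generator $\mathcal{L}$ of \eqref{eq:th_transformed_underdamped} and its formal $L^2$-adjoint $\mathcal{L}^*$, splitting it as $\mathcal{L}^* = g(x)\,\mathcal{L}_{\mathrm{Ham}}^* + \mathcal{L}_{\mathrm{FD}}^*$, where $\mathcal{L}_{\mathrm{Ham}}^*$ is the (divergence-form) transport part coming from $g(x)p\cdot\nabla_x - g(x)\nabla_xV\cdot\nabla_p$ and $\mathcal{L}_{\mathrm{FD}}^*$ collects the friction, the added correction $\beta^{-1}\nabla_x g\cdot\nabla_p$, and the diffusion $\gamma\beta^{-1}g(x)\Delta_p$.

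For the transport part I would apply $\mathcal{L}_{\mathrm{Ham}}^*$ to $\rhoequn$ and use the two identities \eqref{eq:nablaqrho} and \eqref{eq:nablaprho}: the position-advection term produces $-\nabla_x\cdot(g(x)p\,\rhoequn)$ and the momentum-force term produces $+\nabla_p\cdot(g(x)\nabla_xV\,\rhoequn)$; expanding both with the product rule and substituting $\nabla_x\rhoequn=-\beta\nabla_xV\,\rhoequn$, $\nabla_p\rhoequn=-\beta p\,\rhoequn$, the $\beta$-weighted pieces cancel pairwise and one is left only with $-(\nabla_x g\cdot p)\,\rhoequn$ from differentiating the prefactor $g(x)$ in the first term (the second term contributes nothing extra since $g$ does not depend on $p$). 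For the fluctuation--dissipation part, the Ornstein--Uhlenbeck-in-$p$ block $\gamma\nabla_p\cdot(g(x)(p\,\rhoequn+\beta^{-1}\nabla_p\rhoequn))$ vanishes by \eqref{eq:nablaprho} exactly as in the standard (un-rescaled) Langevin computation, leaving only the correction term $-\beta^{-1}\nabla_p\cdot(\nabla_x g(x)\,\rhoequn)$. Since $\nabla_x g$ is independent of $p$, this equals $-\beta^{-1}\nabla_x g\cdot\nabla_p\rhoequn = +\nabla_x g\cdot p\,\rhoequn$ by \eqref{eq:nablaprho}, which is precisely the leftover term from the transport part with the opposite sign. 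Hence $\mathcal{L}^*\rhoequn=0$, i.e. $\rhoequn$ is invariant — this is the whole point of the correction term, and writing it this way makes the cancellation transparent.

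For uniqueness I would not reprove a hypoelliptic Fokker--Planck well-posedness theorem from scratch; instead I would note that the coefficients of \eqref{eq:th_transformed_underdamped} satisfy the regularity and growth bounds needed (drift with at most linear growth in $(x,p)$ via Assumption~\ref{ass:pot_2} together with Criteria~\ref{gLipschitz}--\ref{VgLipschitz} and boundedness of $g$, $\nabla g$; bounded diffusion coefficient by Criterion~\ref{gbounded}), and that Hörmander's bracket condition holds since $g$ is bounded below (Criterion~\ref{gbounded}) so the $\partial_{p_i}$ directions together with their commutators with the transport field $g(x)p\cdot\nabla_x-\cdots$ span $\mathbb{R}^{2d}$; well-posedness of \eqref{eq:th_transformed_underdamped} itself is Proposition~\ref{prop:uniquesol}'s analogue, already guaranteed by the stated criteria. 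Combined with irreducibility of the hypoelliptic diffusion this yields uniqueness of the stationary measure, so $\rhoequn$ in \eqref{eq:invariant_under} is \emph{the} invariant distribution.

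The main obstacle is the uniqueness half: unlike the overdamped case, the generator here is degenerate (no diffusion in $x$), so Theorem~4.1 of \cite{Pavliotis2014} does not apply directly, and one must appeal to hypoellipticity/Hörmander-type arguments to conclude that a stationary density exists and is unique. The invariance computation itself is essentially bookkeeping once the $\mathcal{L}^* = g\,\mathcal{L}_{\mathrm{Ham}}^* + \mathcal{L}_{\mathrm{FD}}^*$ split is set up; the only subtlety to watch is correctly accounting for where the derivatives of the scalar prefactor $g(x)$ land, since it is exactly those terms whose cancellation forces the precise form $\beta^{-1}\nabla_x g\,dt$ of the correction.
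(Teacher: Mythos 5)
Your proposal is correct and follows essentially the same route as the paper: a direct verification that $\rhoequn$ lies in the kernel of the Fokker--Planck (adjoint) operator of the system \eqref{eq:th_transformed_underdamped}, using the identities \eqref{eq:nablaqrho}--\eqref{eq:nablaprho}, with the Ornstein--Uhlenbeck-in-$p$ part cancelling as usual and the correction term $\beta^{-1}\nabla_x g$ exactly offsetting the leftover $-p\cdot\nabla_x g\,\rhoequn$ produced by the $g$-rescaled transport. The theorem itself only asserts invariance, so your uniqueness half goes beyond the paper's proof; the paper handles precisely that point (hypoellipticity via the H\"ormander condition, plus an alternative time-change/geometric-ergodicity argument) in the discussion following the proof, in the same spirit as your sketch.
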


\begin{proof}
% \todo[inline]{Jonas : Discuss the dimension of the probability distribution $u$, make sure that the notations make sense, shouldn't it be $u(t,x,p) \in C^{2,1}(\mathbb{R}^+ \times \mathbb{R}^{d+d} \times \mathbb{R}^d)$.}
To show that the equation \eqref{eq:invariant_under} is a solution to the system \eqref{eq:th_transformed_underdamped}, we introduce the initial value problem for the forward Kolmogorov equation associated to the system \eqref{eq:th_transformed_underdamped} with transition density $u(t,x,p) \in C^{2,1}(\mathbb{R}^+ \times \mathbb{R}^d \times \mathbb{R}^d)$. This is given by
\begin{align}\label{eq:operator_FP_inva}
\begin{split}
\mathcal{L^*} u =& - \begin{pmatrix} \nabla_x\\ \nabla_p\end{pmatrix} \cdot \begin{pmatrix} p g(x) \\ -(\nabla_x V(x))g(x)+\beta^{-1} \nabla_x g(x)  - \gamma g(x) p \end{pmatrix} u \\&+ \frac{1}{2}  \begin{pmatrix} \nabla_x\\ \nabla_p\end{pmatrix} \cdot \begin{pmatrix} \nabla_x\\ \nabla_p\end{pmatrix} \cdot \left( \begin{pmatrix} 0&0\\0& \sqrt{2 g(x) \frac{\gamma}{\beta}} \end{pmatrix}\begin{pmatrix} 0&0\\0& \sqrt{2 g(x) \frac{\gamma}{\beta}} \end{pmatrix}^T u \right)
\end{split}
\end{align}
which yields
\begin{align}
\begin{split}
 \mathcal{L^*} u =&- \nabla_x (g(x)p) u - \nabla_p \left[ -g(x) (\nabla_x V(x))+\beta^{-1} \nabla_x g(x) -\gamma g(x) p\right] u \\&+ \gamma \beta^{-1} g(x) \Delta_p  u.
\end{split}
\end{align}
Replacing $u$ by $\rhoequn$ and using \eqref{eq:nablaprho}, the last two terms cancel as the configuration dependence of $g(x)$ plays no role, i.e., it corresponds to an Ornstein-Uhlenbeck SDE which automatically preserves $\rhoequn$ $-\Delta_p (\gamma g(x) p) \rhoequn = \gamma \beta^{-1} g(x) \Delta_p  \rhoequn$. Using the product rule in the first term and \eqref{eq:nablaqrho}, as well as  \eqref{eq:nablaprho} on the second and third term, we are left with
\begin{align*}
\begin{split}
{\cal L}^* \rhoeq =& -p \left(\nabla_{x} g(x)\right) \rhoeq + \beta p g(x) \nabla_x V(x) \rhoeq + p \left(\nabla_x g(x) \right) \rhoeq - \beta p g(x) \nabla_x V(x) \rhoeq 
\end{split}
\end{align*}
implying that ${\cal L}^* \rhoeq = 0$. 
\end{proof}

Since the noise only appears in the momentum equation, the second order term depends only on the momentum $p$, and we cannot use a similar reasoning for the system \eqref{eq:th_transformed_underdamped}, but hypoellipticity of the operator can be shown using the H\"ormander condition to establish uniqueness of the solution \cite{HormanderLars1967Hsod}. A discussion can be found in \cite[Section~6.1]{Pavliotis2014}. 

An alternative intuitive reasoning arises by taking the first moments with respect to $\rhoequn$ of the process $X_t$ associated to the system \eqref{eq:th_transformed_underdamped}. The expression $\lim_{t^* \to \infty}\frac{1}{t^*} \int_0^{t^*} f(X_t) d t$ under the change of variable $d t = g^{-1}(X_{\tau})  d \tau$ is 
\begin{align}\label{eq:frac_firstmom}
\lim_{\tau(t^*) \to \infty}\frac{ \frac{1}{\tau(t^*)}\int_0^{{\tau(t^*)}} f(X_\tau) g^{-1}(X_\tau) d\rhoequn (\tau)}{ \frac{1}{\tau(t^*)} \int_0^{\tau(t^*)} g^{-1}(X_\tau) d\rhoequn (\tau)}.
\end{align}
% and by definition are the fraction of the first moments with respect to $\rhoequn$ of $f g^{-1}$ and $g^{-1}$.
% \begin{align} \label{eq:frac_firstmom}
% \frac{\mathbb{E}_{\rhoequn} \left[f(X_\tau) g^{-1}(X_\tau)\right]}{\mathbb{E}_{\rhoequn} \left[ g^{-1}(X_\tau)\right]}.
% \end{align}
Under a similar change of variable $d \tau = g(X_{\tau})  dt$, it is easy to show that the system \eqref{eq:th_transformed_underdamped} becomes
\begin{align} \label{eq:tr_under_timechanged}
\begin{cases}
d x & = p d \tau,\\
d p & = \left(-\nabla_x V(x)+ \beta^{-1} \nabla_x \log (g(x)) \right) d \tau - \gamma p d \tau + \sqrt{2\gamma \beta^{-1}} d W(\tau), 
\end{cases}
\end{align}
with invariant distribution
\begin{equation*}
     \hatrhoequn= \hat{Z} \exp\left(-\beta V(x) + \log g(x) + \frac{\|p\|^2}{2} \right).
\end{equation*}
We now show uniqueness of this invariant distribution and geometric ergodicity of \eqref{eq:tr_under_timechanged}.
Indeed, these follow from Theorem 3.2 under Condition 3.1 from \cite{MattinglyStuartHigham2002}. This condition requires that for such an underdamped system \eqref{eq:tr_under_timechanged}, we have that $F(x)=\left( V(x)- \beta^{-1} \log (g(x)) \right)$ is such that $F(x) \geq 0,$ for all $x \in \mathbb{R}^d$, and that there exists an $\alpha > 0 $ and a $\beta \in (0,1)$ such that 
\begin{equation}
    \frac{1}{2} \langle  F(x),x \rangle \geq \beta F(x) + \gamma^2 \frac{\beta (2-\beta)}{8(1-\beta)} \|x\|^2 - \alpha 
\end{equation}
which applies if $F$ grows at infinity as $\|x\|^{2l}$, where $l$ is some positive integer. This condition is true by Assumption \ref{ass:pot_1} and by $\log(g(x))$ being bounded by Criterion \ref{gbounded}. Thus, the dynamical system \eqref{eq:tr_under_timechanged} is geometrically ergodic, the invariant distribution is unique and the expression \eqref{eq:frac_firstmom} can be written as 
\begin{align*}
 \frac{\int_{\Omega}  \exp(-\beta V(x) + \log g(x)) f(x) g^{-1}(x) d \rhoequn(x)}{\int_{\Omega}  \exp(-\beta V(x) + \log g(x)) g^{-1}(x) d \rhoequn(x)}.
\end{align*}
Under the usual change of variable, we recover the space average of the system \eqref{eq:th_transformed_underdamped}
\begin{align*}
\frac{\int_\Omega  \exp(-\beta V(x)) f(x) d \rhoequn(x)} {\int_\Omega  \exp(-\beta V(x)) d \rhoequn(x)}
\end{align*}
using $\exp\left(\log g(x)\right)g^{-1}(x)=1$. This implies ergodicity of the continuous process \eqref{eq:th_transformed_underdamped}:
\begin{align*}
 \lim_{t^* \to \infty}\frac{1}{t} \int_0^{t^*} f(X_t) d \rhoequn(t) = \int_\Omega  \exp(-\beta V(x)) f(x) d \rhoequn(x).
\end{align*}
%which shows the ergodicity of the continuous process \eqref{eq:th_transformed_underdamped}.

% \todo[inline]{All: The rest of the proof, I am not sure if it is a "proof", it is more a drafty way of showing that it does work, not sure how you say that in math language, put it in there: \ref{app:underdamped_proof}} .

%----------------------------------------------------------------------------------------
%	Numerical considerations 
%----------------------------------------------------------------------------------------

\section{Numerical integrators}
\label{sec:numerical_cons}
Having shown that the continuous processes have the unique Gibbs distribution as invariant distribution, we turn our focus to the design of numerical methods for \eqref{eq:transformed_overdamped_sde} and \eqref{eq:th_transformed_underdamped}. Regarding the overdamped system, a sufficient condition to guarantee that the numerical approximation of the long term behaviour of the stochastic differential equation converges to the invariant distribution is provided by \cite[Theorem~5.1, 5.2, 5.3]{MattinglyStuartTretyakov2010}. For the underdamped system \eqref{eq:full_langevin_1d}, a simple and popular approach to numerical timestepping integration is based on splitting schemes \cite{BOU-RABEENAWAF2010LAOV,MelchionnaSimone2007Doqp,SKEELROBERTD.2002Aiif}, which break the equations into separate parts to be solved independently. Following the approach of \cite{LeimkuhlerMatthews2013,LeMaSt2015}, we use splitting schemes which are built from the building blocks A, B and O as illustrated below:
\begin{align} \label{eq:full_langevin_1d_split}
    d \begin{pmatrix}
        x \\ p
    \end{pmatrix} =\underbrace{\begin{pmatrix}
        p \\ 0
    \end{pmatrix} dt}_\text{A}+ \underbrace{\begin{pmatrix}
        0 \\ -\nabla V(x)
    \end{pmatrix} dt}_\text{B} +\underbrace{\begin{pmatrix}
        0\\ -\gamma p dt + \sqrt{2 \beta^{-1}} d W(t)
    \end{pmatrix}}_\text{O}.
\end{align}
Letter sequences denote numerical methods so for example the operator associated to the method ABO is
\begin{align}
    \Psi_{\textrm{ABO}}^{h}=\Psi_{\textrm{O}}^{h} \circ \Psi_{\textrm{B}}^{h} \circ
    \Psi_{\textrm{A}}^{h},
\end{align}
where  $\Psi_{\textrm{A}}^{h} =\left ( x +h p,p \right )$,  $\Psi_{\textrm{B}}^{h} =\left ( x, p-h \nabla V(x) \right )$ and for the O step we use the map
\[
\Psi_{\textrm{O}}^{h}(x,p) = \left ( x, \exp(-h\gamma)p + \sqrt{\beta^{-1} (1-\exp(-2\gamma h))} Z \right )
\]
with $R \sim \mathcal{N}(0,1)$. 
In the case of symmetric composition methods such as ABOBA, OBABO, etc., we view each of the  A and B operations as being performed with a half time step.  Each component can be identified with an associated generator $\mathcal{L}_{\textrm{A}}$, $\mathcal{L}_{\textrm{B}}$,  and $\mathcal{L}_{\textrm{O}}$, where, for example,  
\[
\Psi_{\textrm{A}}^{h} \equiv \exp\left (h {\mathcal{L}}_{\textrm{A}} \right).
\]

As in \cite{LeimkuhlerMatthews2013,LeMaSt2015} we write the discrete propagator in the form $\exp(t {\mathcal{L}})$ using a perturbation series
\begin{align*}
    \hat{\mathcal{L}} = \mathcal{L}_{\textrm{LD}} + h \hat{\mathcal{L}}_1 + h^2 \hat{\mathcal{L}}_2 + O(h^4),
\end{align*}
and employ the Baker-Campbell-Hausdorff (BCH) formula to work out the terms of the expansion \cite{HairerErnst2006GNIS}.  The probability distribution associated to the discretization scheme can be assumed to have a density which evolves from timestep $n$ to timestep $n+1$ by
\[
\rho_{n+1} = \exp (h {\hat{\mathcal L}^*}) \rho_n,
\]
where $\hat{\mathcal{L}}^*$ represents the $L_2$-adjoint of $\hat{\mathcal{L}}$.  In a similar way, BCH can be used to work out the terms of the adjoint of the generator. One useful property is that symmetric composition methods constructed in this way have even weak order, due to cancellation properties of the BCH expansion.  In a typical case such symmetric schemes are found to be more efficient (greater accuracy per unit computational work) than their asymmetric counterparts, since we can often reuse a force evaluation performed at the end of one step at the start of the next.    More details on the design and analysis of such splitting schemes may be found in \cite{LeMaSt2015}. 

Inspired by such splittings, we seek similar methods for the transformed system of SDEs \eqref{eq:th_transformed_underdamped} which requires the computation of extra terms such as $\nabla g(x) \beta^{-1}$, as well as rendering the step A implicit. If the correction term is included in Step B, we have 
\begin{align}
\hat{\rm{B}}&:=
\begin{cases}
dx&=0\\
dp &= -\nabla V(x(t)) g(x(t)) dt + \nabla g(x) \beta^{-1} dt
\end{cases}\\
\hat{\rm{O}}&:=
\begin{cases}
dx&=0\\
 d p &=-\gamma p g(x)dt + \sqrt{2\beta^{-1}  g(x)}dW(t).
 \end{cases}
\end{align}
Both of these vector fields can be evolved exactly, in the weak sense, with their solutions summariced in Algorithms
\ref{alg:stepB_hat} and \ref{alg:stepO}.
\begin{algorithm}[H]
\caption{A step of size $h$ for $\hat{\rm{B}}$.  Inputs: $X,\ P,\ h,\ F,\ G,\ G_p$.} \label{alg:stepB_hat}
\begin{algorithmic}
\STATE{$G \leftarrow g(X)$}
\STATE{$F \leftarrow -\nabla V(X)$}
\STATE{$G_p \leftarrow \nabla g(X)$}
\STATE{$P \leftarrow P+h(G F+\beta^{-1} G_p)$} 
\RETURN $P,\ G,\ F,\ G_p$
\end{algorithmic}
\end{algorithm}
\begin{algorithm}[H]
\caption{A step of size $h$ for $\hat{\rm{O}}$.  Inputs: $X,\ P,\ h,\ F,\ G$.} \label{alg:stepO}
\begin{algorithmic}
\STATE{$G \leftarrow  g(X)$}
\STATE{$C \leftarrow  \exp(- G h \gamma)$}
\STATE{$Z \sim {\cal N}(0,1)$}
\STATE{$P\leftarrow  C P + \sqrt{\beta^{-1} (1-C^2)} Z$} 
\RETURN $P,\ G$
\end{algorithmic}
\end{algorithm}
If we compute the correction term in step O we have
\begin{align}
\tilde{\rm{B}}&:= 
\begin{cases}
dx&=0\\
dp &= -\nabla V(x(t)) g(x(t)) dt
\end{cases}\\
\tilde{\rm{O}}&:=
\begin{cases}
   d x &=0\\
 d p &=-\gamma p g(x)dt +\nabla g(x) \beta^{-1} dt + \sqrt{2\beta^{-1} \gamma g(x)}dW(t).
    \end{cases}
\end{align}
Similarly, the step B step can be resolved using the Euler method while the steps O may be solved exactly as well, as these are OU process and the correction term only depends on $x$, which is constant. For an OU process $dx=\theta(\mu-x_t)dt+\sigma dW(t)$, the drift terms are $\theta = \gamma g(x)$ and $\mu = \frac{\nabla g(x) \beta^{-1}}{\gamma g(x)}$. The solution has mean $p_0 \exp(-\gamma g(x) dt) + \frac{\nabla g(x)\beta^{-1}}{\gamma g(x)} (1-\exp(-\gamma g(x) dt)$ and variance $\sigma^2=\frac{1}{\beta} (1-\exp(-2 \gamma g(x) dt))$. This leads to Algorithms \ref{alg:stepB} and \ref{alg:stepO_hat}. 
\begin{algorithm}[H]
\caption{A step of size $h$ for $\tilde{\rm{B}}$.  Inputs: $X,\ P,\ h,\ F,\ G,\ G_p$.} \label{alg:stepB}
\begin{algorithmic}
\STATE{$F= -\nabla V(X)$}
\STATE{$G = g(X)$}
\STATE{$G_p = \nabla g(X)$}
\STATE{$P:=P+h G F$} 
\RETURN $P,\ F,\ G,\ G_p$
\end{algorithmic}
\end{algorithm} 
\begin{algorithm}[H]
\caption{A step of size $h$ for $\tilde{\rm{O}}$.  Inputs: $X,\ P,\ h,\ G$.} \label{alg:stepO_hat}
\begin{algorithmic}
\STATE{$G \leftarrow  g(X)$}
\STATE{$C \leftarrow  \exp(- G h \gamma)$}
\STATE{$Z \sim {\cal N}(0,1)$}
\STATE{$P:= C P +\frac{G_p (1-C)}{\beta \gamma G}+\sqrt{\beta^{-1} (1-C^2)} Z$} 
\RETURN $P,\ G$
\end{algorithmic}
\end{algorithm} 

For step A, the integration is a little more complicated.  Due to the introduction of the monitor function $g$, the vector field becomes
\begin{align}
\hat{\rm{A}}:=
\begin{cases}
   d x&=  p g(x) dt\\
d p  &=0
\end{cases}
\end{align}
To obtain second order schemes, we need to discretize this ODE system by an appropriate method.  We elect to use the implicit midpoint method and to solve the implicit equations by fixed point iteration.  We write this as a step of length $h$, as follows:
\begin{align} \label{eq:stepA}
    {x}_{n+1}={x}_n+ h {p}_n g\left(\tfrac{1}{2}(x_{n+1}+x_{n})\right),
\end{align}
where we iterate over the value of $x^j_{n+1}$. The iterations yield 
\begin{align}
    {x}^{j}_{n+1}={x}_n+ h {p}_n g\left(\tfrac12(x^{j-1}_{n+1}+x_{n})\right),
\end{align}
where the first guess is given by $x^0_{n+1}={x}_n+ h {p}_n g(x_{n})$ and $j=1, \dots, J$.  We set a tolerance $10^{-12}$ and a maximum number of iterations $n_{\max}=100$. In Figure \ref{fig:stepA_exploration}, the maximum number of iterations required does not explode and the required tolerance is reached on average in under 6 steps. The difference in the last two iterations of the fixed point algorithm is always below $10^{-12}$, as required by the algorithm. Note that for the accuracy results, we set the tolerance to ${10}^{-16}$. 
\begin{figure}[htp]
\begin{center}
\includegraphics[width=3in]{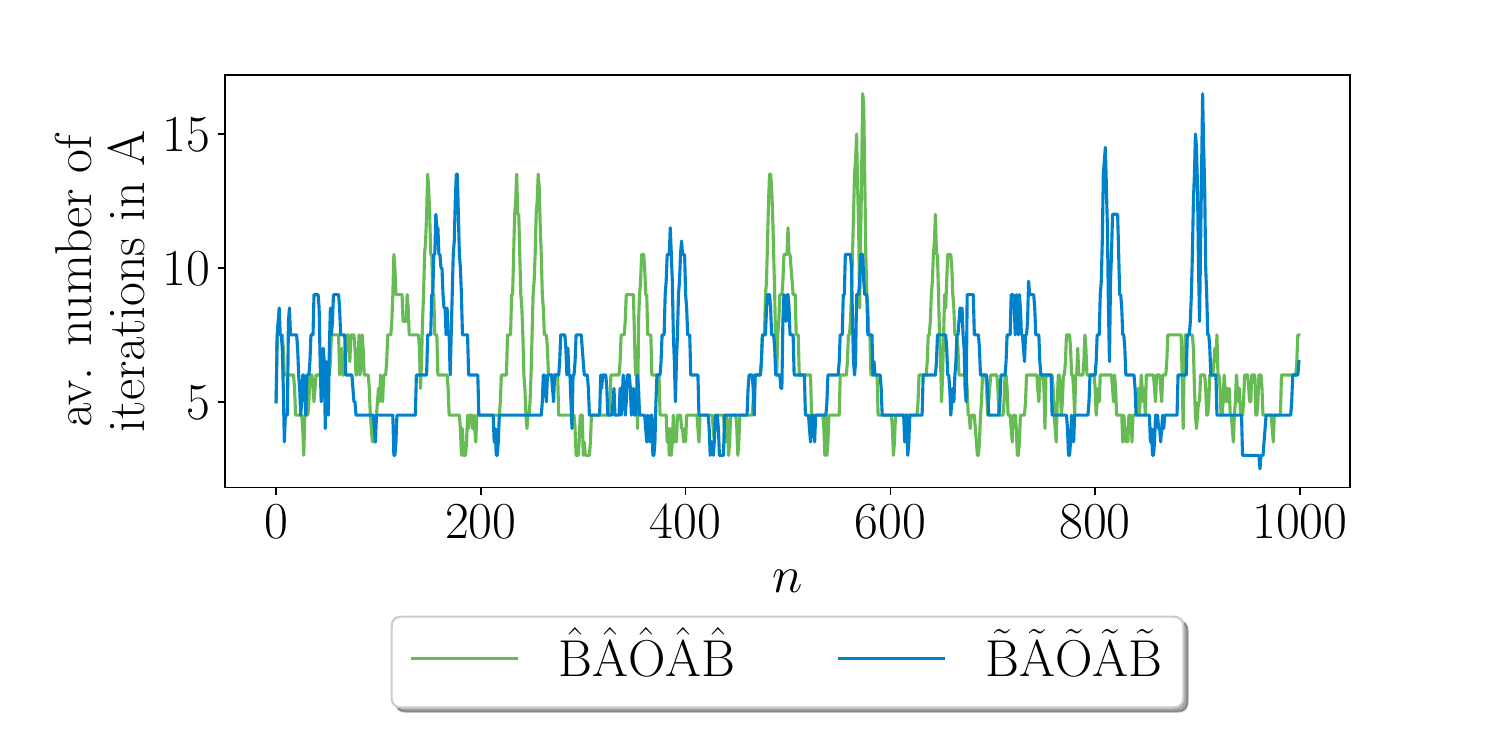}
\caption{The average number of iterations over the two steps A to reach the required tolerance for the algorithms $\hat{\rm{B}}\hat{\rm{A}}\hat{\rm{O}}\hat{\rm{A}}\hat{\rm{B}}$ and $\tilde{\rm{B}}\tilde{\rm{A}}\tilde{\rm{O}}\tilde{\rm{A}}\tilde{\rm{B}}$.}
\end{center}    
\label{fig:stepA_exploration}
\end{figure}

In the sequel we denote the various algorithms by $\hat{A}(h)$,$\tilde{A}(h)$, $\hat{B}(h)$,$\tilde{B}(h)$, $\hat{O}(h)$, $\tilde{O}(h)$, where, among the inputs, just the stepsize is explicitly indicated.
We thus obtain two schemes by composition:
\[
\hat{\Psi}_{h} =\hat{B}(h/2)\hat{A}(h/2)\hat{O}(h)\hat{A}(h/2)\hat{B}(h/2)
\]
\[
\tilde{\Psi}_{h} = \tilde{B}(h/2)\tilde{A}(h/2) \tilde{O}(h)\tilde{A}(h/2)\tilde{B}(h/2).
\]
Each method requires at least four evaluations of the monitor function $g$ and two of the function $g'$, and these evaluations are potentially more computationally intensive for step A, depending on the number of iterations needed. We make the working assumption that the function $g: \mathbb{R^d} \to \mathbb{R}$ can be constructed based on a greatly simplified expression compared to the full computation of the potential energy function, so we anticipate that the added computational cost is relatively small in large dimensional settings. We note that the steps A, B and O may also be used within adaptive versions of alternative splitting schemes such as OBABO, ABOBA and the stochastic Verlet position (SPV) method \cite{Melchionna2007}, where the first and last half step are computed by a similar approach to the fixed point integration required for A, while the middle step can be computed exactly as step $\rm{\tilde{O}}$. The Python and C\texttt{++} source code as well as the environment for reproducing our results are available online.\footnote{Source Code: \href{https://github.com/alixsleroy/Adaptive-stepsize-algorithms-for-Langevin-dynamics}{https://github.com/alixleroy/Adaptive-stepsize-algorithms-for-Langevin-dynamics}}

\section{Numerical experiments}
\label{sec:numerical_experiments}
After reviewing the properties of the discretization schemes applied to the the IP-transformed SDEs \eqref{eq:transformed_overdamped_sde} and \eqref{eq:th_transformed_underdamped}, we describe numerical simulations to illustrate the results. The examples illustrate that computational effort can be reduced while obtaining a similar accuracy. We compute functionals of the processes to evaluate the weak error. The error in the sampled distribution $\hat{\rho}$ is reduced to an average of an observable $\phi$ at time $t>0$ \cite{LeimMatTret2014}, the true value $\bar{\phi}$ is computed analytically or by quadrature via \emph{scipy.integrate.quad} \cite{2020SciPy-NMeth}. The observable estimated by a numerical timestepping scheme with step $h$ is $\hat{\phi}(t,h)$. We have
\begin{align*}
\bar{\phi}(t)&=E_{\rho(t,.)} \phi = \int_{\mathbb{R}^d} \phi(x) \rho(t,x) dx, \\
    \hat{\phi}(t,h)&=E_{\rho_N(.)} \phi =\frac{1}{M} \sum_{j=1}^{M} \phi(X_N^{j}),
\end{align*}
where the sample $X_N^{j} \approx x(N h)$ and we posit the use of a large number of trajectories $M$ to reduce the sampling error. A time-discrete approximation converges weakly with order $r > 0$ at time $T_f$ if there exists a positive constant $C$ which does not depend on the stepsize $h$ such that $ |\hat{\phi}(t)-\bar{\phi}(t,h)|\leqslant C h^r$ \cite[chapter~9, section~6]{kloedenPlaten1999}. A good observable ideally captures most of the behaviour of the distribution of interest. We choose the observables as the moments up to order $k \in \mathbb{N}_0$ such that $\phi(x) = x^k$ as in  \cite[Chapter~9, Section~4]{kloedenPlaten1999}. 
\subsection{Modified harmonic potential}
\subsubsection{Overdamped transformed dynamic}
\label{sec:over_acc}
Recall the example presented in Section \ref{sec:running examples}, with the potential given by \eqref{eq:pot_spring}, plotted in Figure \ref{fig:spring_left}, and the choice of monitor function given by \eqref{eq:monitor_phi} with $g_3(x)=\psi(\omega(x))$. 
% \begin{table}[H]
%     \centering
% \begin{tabular}{ |p{9cm}||p{2cm}|  }
% \hline
%  Parameters of the simulation & Values\\
%  \hline
% Height of the singularity in the spring potential $a$ & $2.75$\\
% Width of the change of frequency in the spring potential $b$ & $0.1$\\
% Weight of the square potential coupled to the the spring potential $c$ & $0.1$\\
% Singularity position in the spring potential $x_0$ & $0.5$\\
% Final time $T$ & $100$\\
% number of trajectoriess $n_s$ & $10^8$ \\
% Number of steps $N_t$ & $T/d t$ \\
% The temperature $\beta^{-1}$ & $0.1$\\
% The friction $\gamma$ & $0.1$\\
% Maximum multiplier of the steps $M$ & $1.5$\\
% Minimum multiplier of the step $m$ & $0.001$\\
% \hline
% \end{tabular}\caption{Parameters set to generate the simulations in Figure \ref{fig:accuracy_spring_steep}}
% \label{table:spring_hard}
% \end{table}
% \todo[inline]{All: Get rid of the table}
In Figure \ref{fig:accuracy_spring_steep} we observe an order of convergence that is similar for the Euler-Maruyama method applied to the transformed and untransformed Langevin equations. The asymptotic regime is reached for a larger stepsize for the IP-transformed overdamped dynamics. This gain in efficiency for the transformed dynamics is not due to a smaller average step, as the average value taken by the monitor function is $1.18$. It implies that a larger average step yields better results. This shows a gain in efficiency. While Figure~\ref{fig:tr_large_h} illustrates that the discretization \eqref{eq:em_tr_sde} is more stable numerically, the results in Figure~\ref{fig:accuracy_spring_steep} show that the discretized process \eqref{eq:em_tr_sde} yields better samples for a lower computational budget. In other words, it approaches the steady state faster than the discretized dynamics \eqref{eq:em_sde}. These results highlight a gain in stability and efficiency.

\begin{figure}[htp]
\hfill
\subfigure[\footnotesize{Final time is set to $T_f=100$ and stepsize between $0.3$ and $1.48$ with a mean of the monitor function $1.2$, number of trajectories is $n_s=10^8$, $M=1.5$, $m=0.001$, $\beta^{-1}=0.1$ ($a=2.75$, $b=0.1$, $c=0.1$, $x_0=0.5$). The monitor function takes $r=1$ and $\alpha=2$.}]{\includegraphics[width=6.cm]{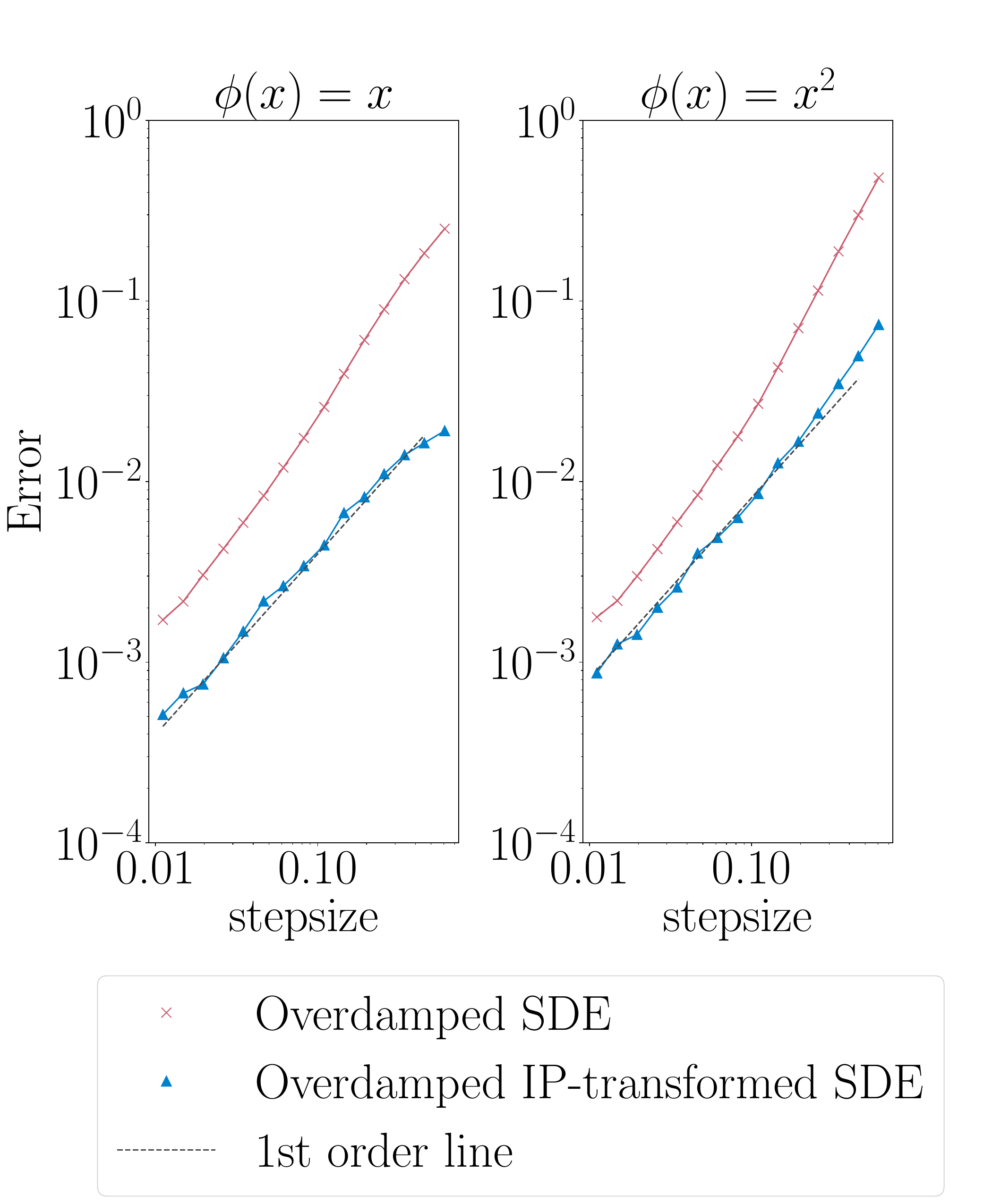}\label{fig:accuracy_spring_steep}}
\hfill
\subfigure[\footnotesize{The modified harmonic potential is closer to a simple harmonic potential as we set the parameters to $a=1$ and $b=1$. The stepsizes are set between $0.6$ and $1.37$ with a mean value of the monitor function of $0.77$. The rest of parameters are similar to Figure \ref{fig:accuracy_spring_steep}.}]{\includegraphics[width=6.cm]{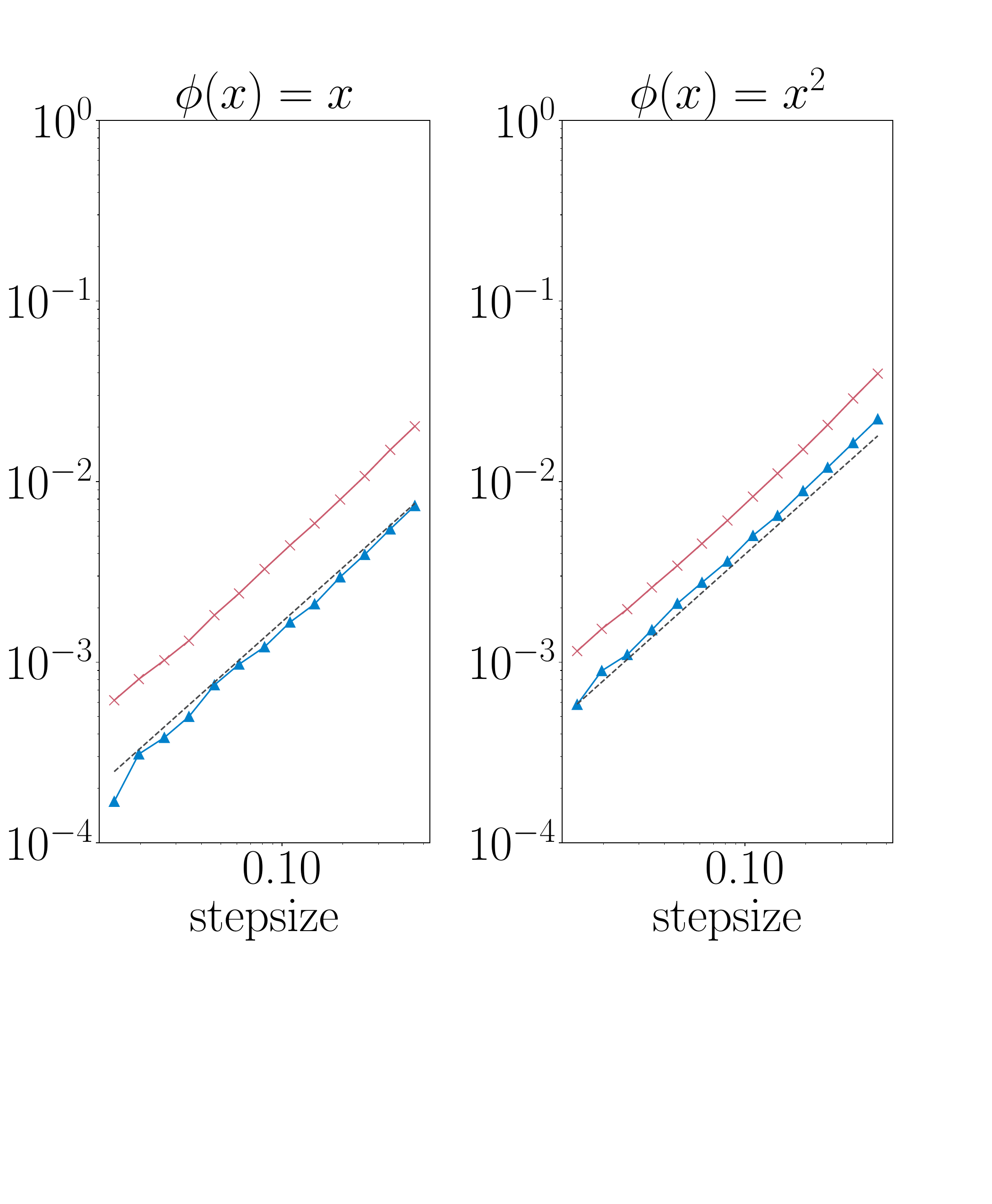}\label{fig:accuracy_spring_easy}}
\hfill
\caption{Order of accuracy in the case of a modified harmonic potential.}
\end{figure}

To demonstrate that there are no changes in the order of convergence of the method, a similar simulation was performed with parameters that render the potential less steep, and easier to integrate. In Figure \ref{fig:accuracy_spring_easy}, the order of convergence of the Euler-Maruyama method is recovered for larger stepsize for both dynamics. 
The potential is close to harmonic. The smaller error arising in the transformed case is this time due simply to the smaller average value taken by the monitor function, $0.77$. We note that this example is easy to numerically integrate and no benefits can be gained from the transformed dynamics. 
\subsubsection{Underdamped transformed dynamic}
%\todo[inline]{Alix: New part below -please read this one carefully}
In this section, we provide results comparing the BAOAB scheme applied to the system \eqref{eq:full_langevin_1d} and the discretization methods developed in Section \ref{sec:numerical_cons}, $\hat{\rm{B}}\hat{\rm{A}}\hat{\rm{O}}\hat{\rm{A}}\hat{\rm{B}}$ and $\tilde{\rm{B}}\tilde{\rm{A}}\tilde{\rm{O}}\tilde{\rm{A}}\tilde{\rm{B}}$, with similar choices of potential and monitor function to those in Section \ref{sec:over_acc}. 
\begin{figure}[htp]
\centering
    \includegraphics[width=.9\textwidth]{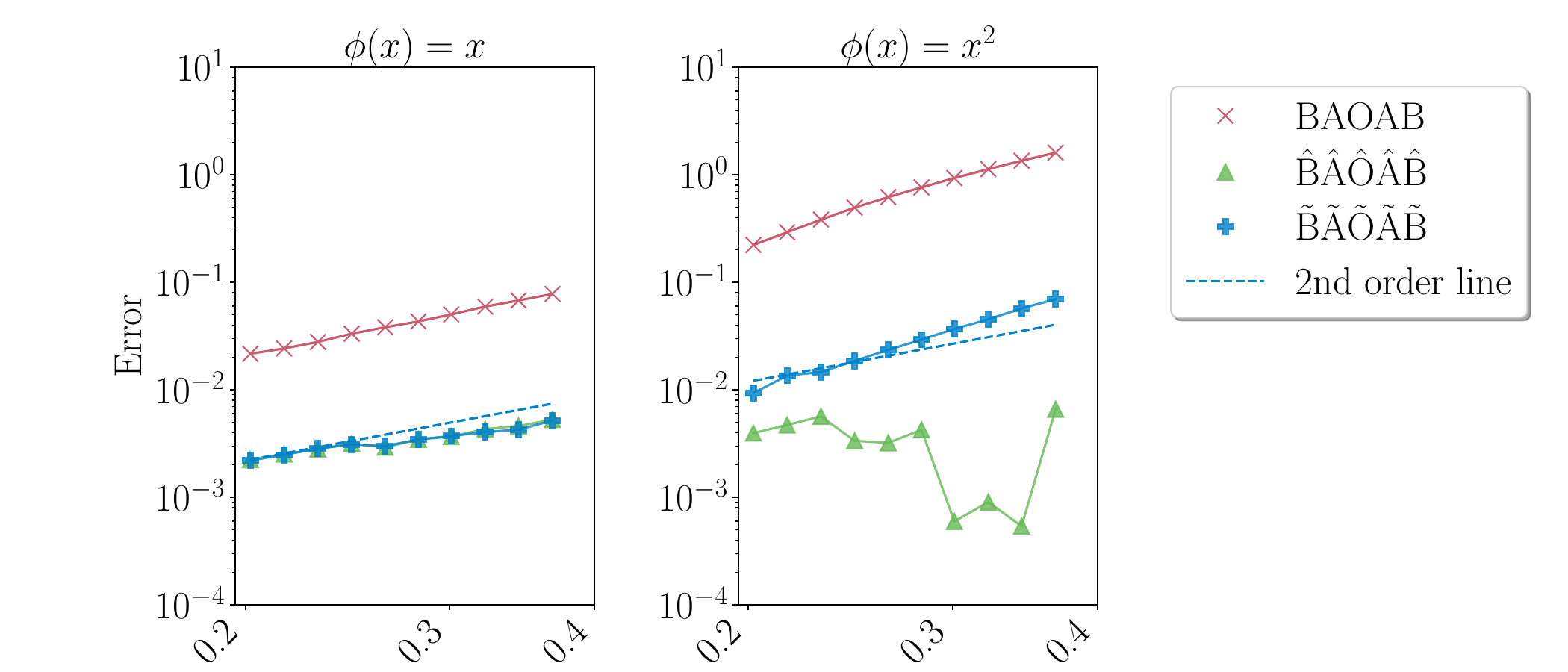}
     \caption{ \footnotesize{Observable in the case of a modified harmonic potential with final time $T_f=40000$, number of burn-in steps $10000$, tolerance for step A set to $10^{-11}$ and maximum number of iterations is $100$. The number of trajectories is $n_s=10^5$, and bounds on the values of the monitor functions are $M=1.1$, $m=0.1$, with parameters $\beta^{-1}=1.$ and $\gamma=0.1$ ($a=2.75$, $b=0.1$, $c=0.1$, $x_0=0.5$). The monitor function takes $r=1$ and $\alpha=2$.}}
    \label{fig:accuracy_spring_easy_under}
\end{figure}
In Figure \ref{fig:accuracy_spring_easy_under}, we observe that the schemes $\hat{\rm{B}}\hat{\rm{A}}\hat{\rm{O}}\hat{\rm{A}}\hat{\rm{B}}$ and $\tilde{\rm{B}}\tilde{\rm{A}}\tilde{\rm{O}}\tilde{\rm{A}}\tilde{\rm{B}}$ have much higher efficiency and display a quadratic decay in the error, similarly to the original schemes \cite{LeimkuhlerMatthews2013}. We note that for these simulations, the average value taken by the monitor function is between $0.991$ and $0.993$. 
As mentioned in Section~\ref{sec:numerical_cons},
we can implement different splitting methods in a similar way. In Figure \ref{fig:accuracy_spring_mult_under} we show accuracy results for a range of splitting schemes for the IP-transformed system \eqref{eq:th_transformed_underdamped}.
\begin{figure}[htp]
\centering
\includegraphics[width=.7\textwidth]{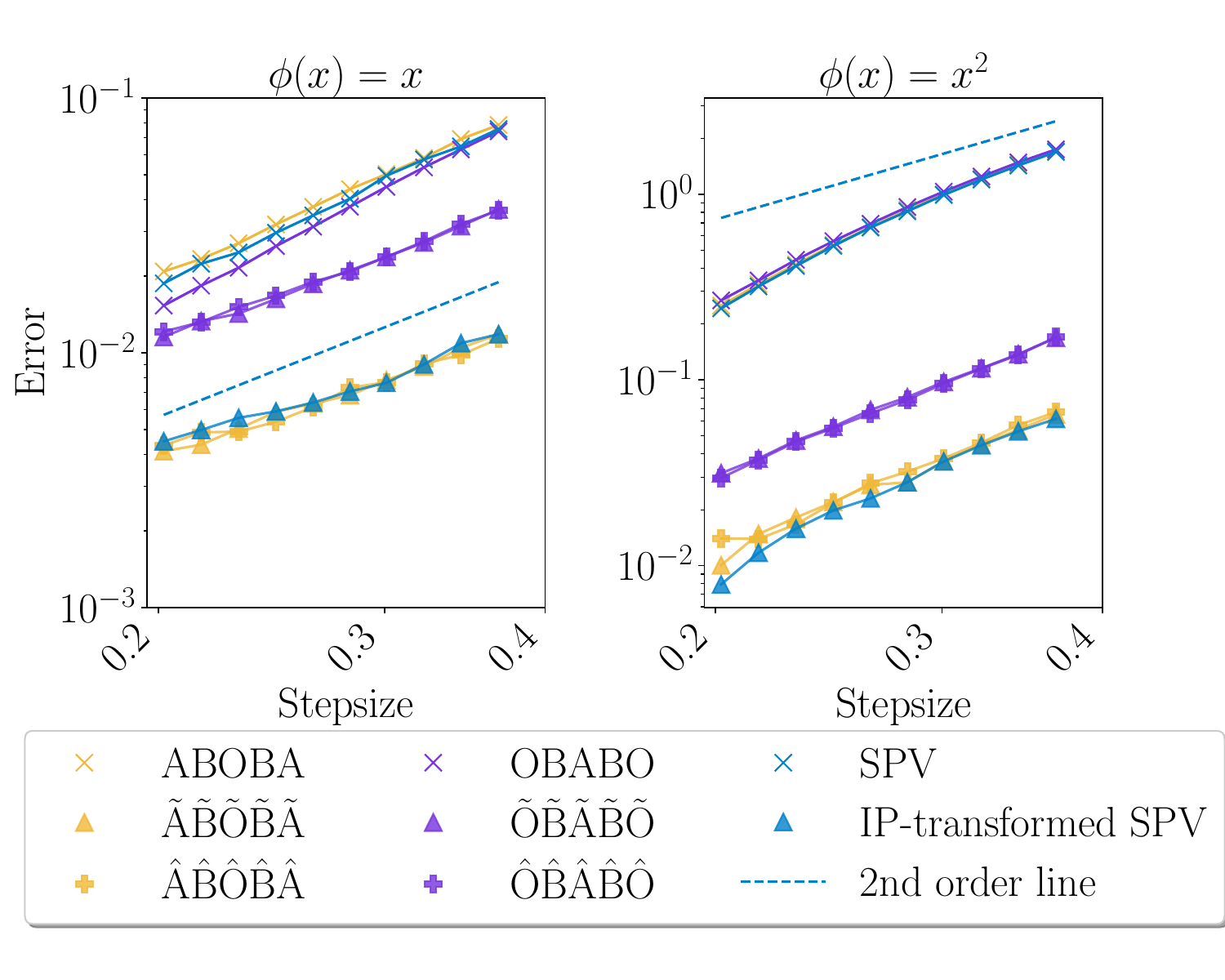}
     \caption{ \footnotesize{Observables in the case of a modified harmonic potential with similar setting as Figure \ref{fig:accuracy_spring_easy_under} comparing different splitting schemes.}}
    \label{fig:accuracy_spring_mult_under}
\end{figure}
We see that the algorithms $\hat{\rm{A}}\hat{\rm{B}}\hat{\rm{O}}\hat{\rm{B}}\hat{\rm{A}}$, $\tilde{\rm{A}}\tilde{\rm{B}}\tilde{\rm{O}}\tilde{\rm{B}}\tilde{\rm{A}}$, and the IP-transformed SPV perform better than the $\hat{\rm{O}}\hat{\rm{B}}\hat{\rm{A}}\hat{\rm{B}}\hat{\rm{O}}$ and $\tilde{\rm{O}}\tilde{\rm{B}}\tilde{\rm{A}}\tilde{\rm{B}}\tilde{\rm{O}}$ schemes. 
We leave for future work the rigorous analysis required to 
explain the relative performance of the splitting schemes. Both $\hat{\rm{B}}\hat{\rm{A}}\hat{\rm{O}}\hat{\rm{A}}\hat{\rm{B}}$ and $\tilde{\rm{B}}\tilde{\rm{A}}\tilde{\rm{O}}\tilde{\rm{A}}\tilde{\rm{B}}$ schemes are recommended for their superior performance. We also suggest to the reader to implement the IP-transformed SPV over the schemes $\hat{\rm{A}}\hat{\rm{B}}\hat{\rm{O}}\hat{\rm{B}}\hat{\rm{A}}$, $\tilde{\rm{A}}\tilde{\rm{B}}\tilde{\rm{O}}\tilde{\rm{B}}\tilde{\rm{A}}$ for ease of implementation. We note that convergence analysis of these schemes is necessary before encouraging wider adoption of those methods.
In summary, the main advantage of these adaptive methods resides in the fact that the asymptotic behaviour is reached for larger stepsizes. Thus, they provide a useful approximation of the distribution more efficiently. Note that we have changed the temperature (by increasing $\beta^{-1}$) in the underdamped simulations compared with the overdamped simulations presented in Figure~\ref{fig:accuracy_spring_easy} -- we did this mainly to simplify the example.

\subsection{A Bayesian example}
\label{sec:bayesian}
Bayesian inference \cite{Gamerman2006suitable} often requires sampling algorithms to approximate so-called posterior distributions.  When the Bayesian framework is used to represent variables on bounded sets, e.g., with steep priors \cite{UribeDongHansen2023}, Langevin-based sampling algorithms are difficult to apply. Hard bounds need to be encoded with boundary conditions, softer bounds produce large gradients in the potential close to the boundary.
We now consider such an inference problem, where we  observe data $y_1,\ldots,y_N \sim \mathrm{N}(\mu,1)$ i.i.d. with an unknown $\mu \in \mathbb{R}$. A priori, we know that the true value of $\mu$ lies in the interval $[1,3]$, which we enforce through a prior distribution that is smoothly bounded. We aim to sample from the posterior distribution $\rho_{\rm Bayes}$ in this setting given by
\begin{align*}
    \rho_{\rm Bayes}(\mu) \propto \prod_{i=1}^N \exp \left(- \tfrac{1}2\|y_i-\mu\|^2 \right)\exp \left( -\left(\mu-a\right)^{2K}\right),
\end{align*}
where the parameter $K$ controls the steepness of the bounds in the approximately uniform prior. As before, we aim at sampling from the potential $V(\mu) = \log p(\mu|\bold{x}) \pi(\mu)$ such that
\begin{align}
% V(\mu) & = \sum_i^N -\frac{1}{2} \|y_i-\mu\|^2 - \left(\mu-a\right)^{2K} +C\\
V' (\mu)&= -\left({\sum}_{i=1}^{N} y_i - N \mu - 2 K(\mu-a)^{2 K -1}\right),
\end{align}
which can be visualiced in Figure \ref{fig:bayesian_right}. An adaptive function adjusted to the prior and the likelihood is
\begin{align}
g(\mu) = \psi\left(2(\mu-a)+\left(\frac{1}N\sum^N_{i=1} y_i-a\right)^2\right),
\end{align}
as seen in Figure \ref{fig:bayesian_g}. The parameters used are $K=4$, $a=2$, $\beta^{-1}=1$, $\gamma=0.1$ and $m=0.1$ and $M=1$. The data are simulated using $y_i \sim \mathcal{N}(1.7,1)$ with $N=10$. The average step in the BAOAB algorithm is adjusted to the average value of the monitor function in the $\tilde{\rm{B}}\tilde{\rm{A}}\tilde{\rm{O}}\tilde{\rm{A}}\tilde{\rm{B}}$ algorithm.

\begin{figure}[htp]
\hfill
\subfigure[The gradient $V'$.]{\includegraphics[width=4cm]{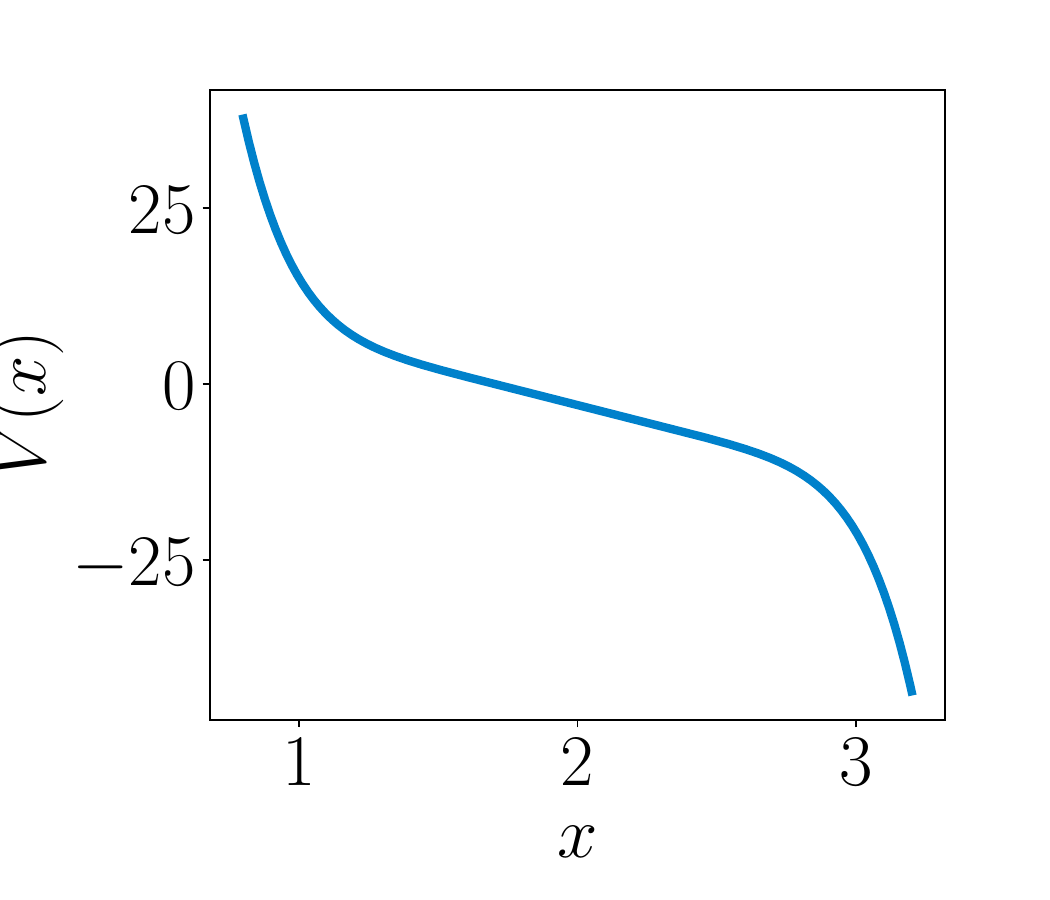}\label{fig:bayesian_right}}
\hfill
\subfigure[The monitor function $g$ with $\alpha=2$ and $r=2$.]{\includegraphics[width=4cm]{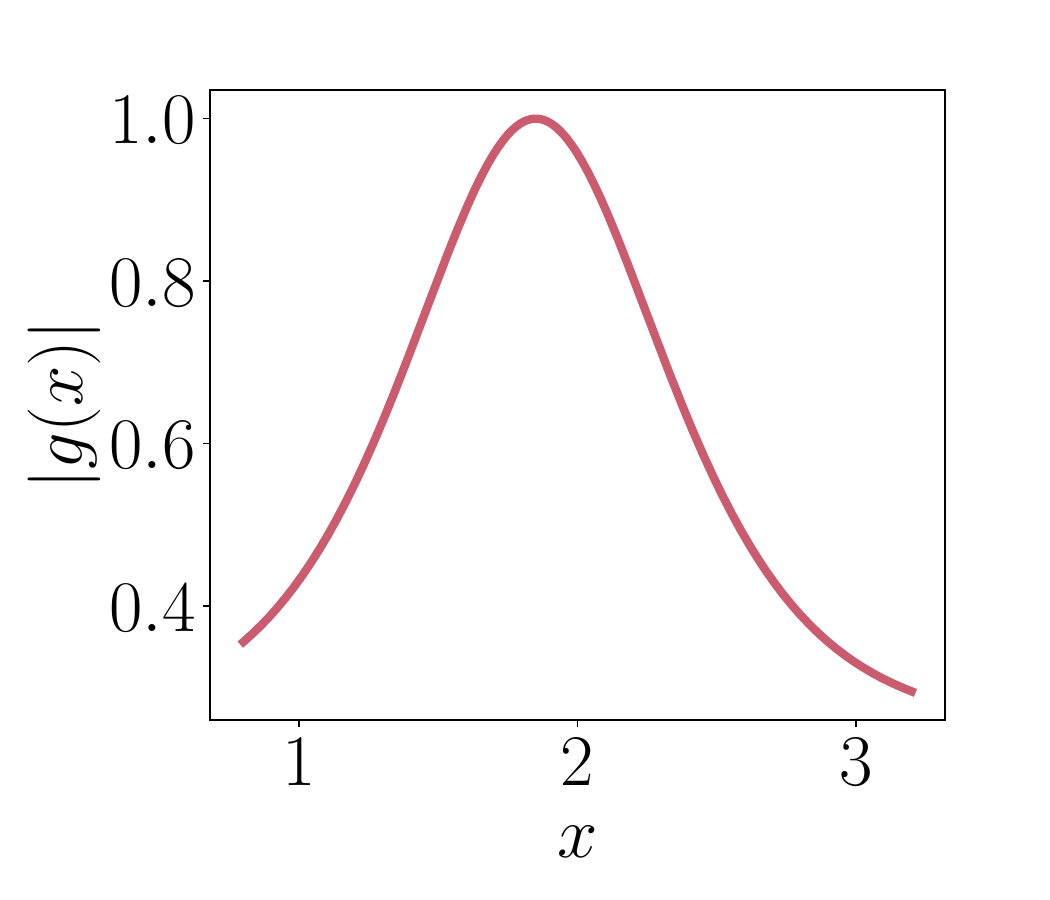}\label{fig:bayesian_g}}
\hfill
\subfigure[The percentage of escaping trajectories over $10^5$ runs with the three different algorithms until $T_f=10^4$.]{\includegraphics[width=4cm]{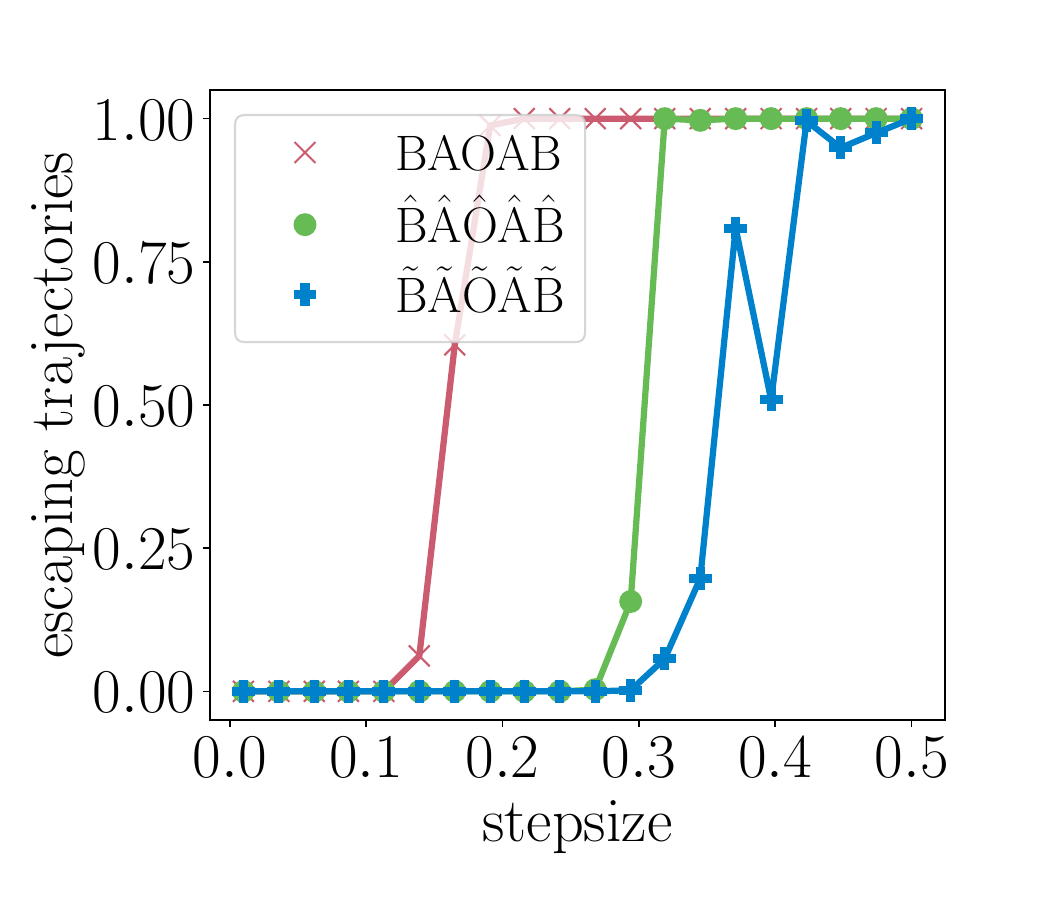}\label{fig:bayesian_escaping}}
\hfill
    \caption{Plots for the Bayesian example and proportion of trajectories that are unstable for different stepsizes.}
\end{figure}

In Fig \ref{fig:bayesian_escaping}, we see that the adaptive algorithm yields fewer escaping trajectories for a similar stepsize. The stepsize used in $\rm{B}\rm{A}\rm{O}\rm{A}\rm{B}$ is rescaled by the smallest average value taken by the monitor function $g$ over the simulations in $\tilde{\rm{B}}\tilde{\rm{A}}\tilde{\rm{O}}\tilde{\rm{A}}\tilde{\rm{B}}$, which is $0.85$. This means that the adaptive algorithm is more stable in the sense of generating fewer unstable trajectories. The adaptive methods 
therefore allow larger stepsizes, saving computational effort.

\subsection{Example 2D: a system with two pathways}
We next turn to a more complicated example. This system is still planar but we now introduce two channels: one narrow valley in the energy landscape and a second, set at a slightly higher energy level, which is much wider and shallower, and is thus entropically favored at modest temperature.  The energy function is
\begin{equation}
q(x,y) = \frac{1+k_1 p_1(x,y) p_2(x,y)}{1+p_1(x,y)}+ k_3 \frac{k_2 p_1(x,y) p_2(x,y)}{1+k_2 p_2(x,y)} + k_4 x^2, 
\end{equation}
where $p_1(x,y) = (y-x^2+4)^2$ and $p_2(x,y) = (y+x^2-4)^2$. In our experiments we set $k_1 = 0.1,\ k_2 = 50,\ k_3=50,\ k_4=0.1$. The parameters can be tuned to influence the difficulty of the numerical integration of the potential. 
% In figure \ref{fig:para_bobsled}, we illustrate how each parameters influence the potential. 
% \todo[inline]{I think we should take off the details of the plots below, it is useful for my thesis but maybe not for the paper?}
% \begin{figure}[H]
% \centering
% \includegraphics[width=1.\textwidth]{figures/bobsled_potential/parameters_bobsled.png} 
% \caption{The parameter $k_1$ influence how steep the potential is on the lower larger channel, while the parameter $k_2$ affects how steep is the potential in the narrower upper channel. The parameter $k_3$ raises the value of the potential in the narrower upper channel and affects the steepness overall. The parameter $k_4$ affects the steepness of the potential over the whole domain, it renders the numerical integration unstable with values generally above 1. } 
% \label{fig:para_bobsled}
% \end{figure}
The dynamics of this system unfold in the vicinity of two parabolic arcs, with the contour plot of the potential shown in Figure \ref{fig:two_paths_2dplot}.  In 
Figure \ref{fig:two_paths_traj} (b) 
we used using a fixed-stepsize Langevin integrator with a small stepsize. 
% ALIX ADD AFTER
\begin{figure}[htp]
\hfill
\subfigure[The dynamics will be confined to the narrow upper and wider lower channels.]{\includegraphics[width=4.5cm]{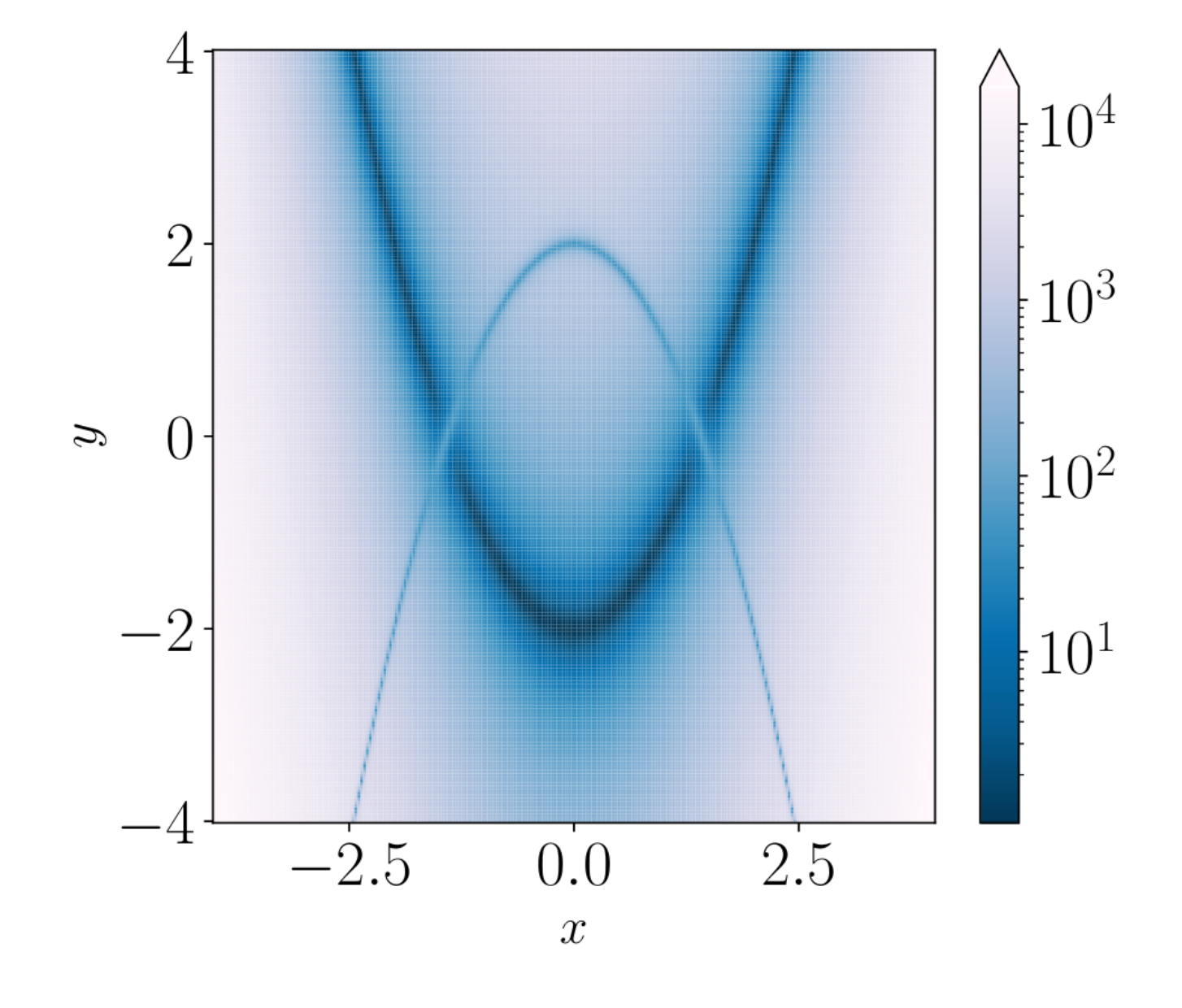}}\label{fig:two_paths_2dplot}
\hfill
\subfigure[Fixed stepsize Langevin sampling using a small stepsize $h=0.005$, $T_f=100000$, a reasonable approximation to the correct distribution. Trajectories are plotted with $\beta^{-1}=0.1$ and $\gamma=0.5$ every 10 steps.]
{\includegraphics[width=4.5cm]{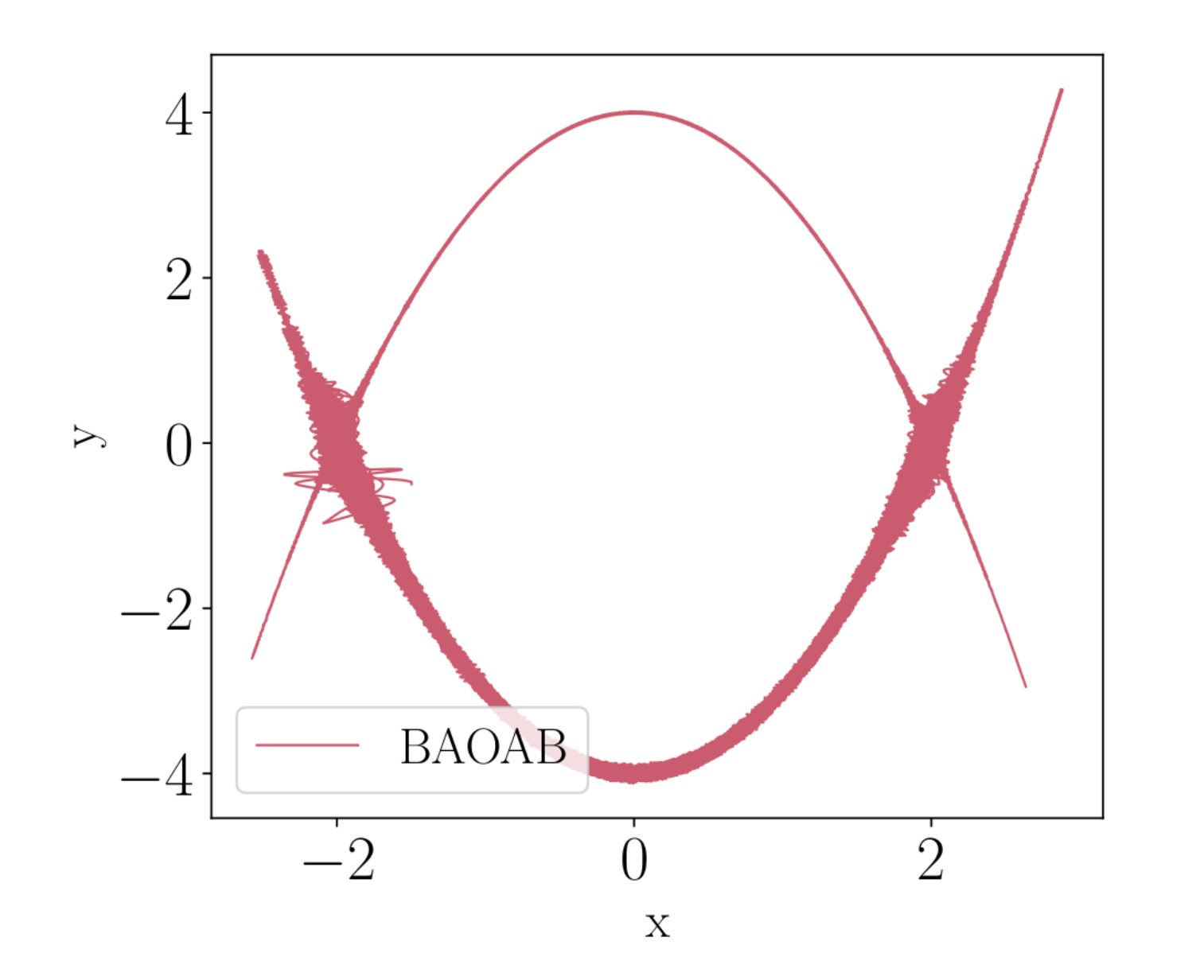}} \label{fig:two_paths_traj} 
\hfill
\caption{The two-path problem with both wide and narrow valleys.} \label{fig:two_paths}
\end{figure}
In Figure~\ref{fig:two_paths_vs} (a),  with a zoomed version in 
Figure~\ref{fig:two_paths_vs} (b), 
this simulation is reproduced using a larger stepsize ($h=0.025$) alongside the results of the adaptive algorithm $\tilde{\rm{B}}\tilde{\rm{A}}\tilde{\rm{O}}\tilde{\rm{A}}\tilde{\rm{B}}$. 
We see that the fixed-stepsize sampling of the target distribution is poor.
We suggest that the larger stepsize prevents trajectories from entering the narrow channel. 
For the adaptive algorithm, we designed a monitor function that reduces the stepsize when close to the upper narrow channel. We us the format presented in \eqref{eq:monitor_phi} with 
$
    \tilde{g}(x,y): = g(f(x,y))
    $ and 
    $
        f(x,y) = (y+x^2-4)^2
        $,
$m=0.2$ and $M=1$. In the results presented in Figure \ref{fig:two_paths_vs}, we adjusted the size of the step in the $\tilde{\rm{B}}\tilde{\rm{A}}\tilde{\rm{O}}\tilde{\rm{A}}\tilde{\rm{B}}$ method to match the average value taken by the monitor function $g$ over the entire trajectory; in that way we provide a comparison for a similar computational effort. The average value taken by the monitor function was $0.65$. 
% ALIX ADD AFTER
\begin{figure}[htp]
\centering
\includegraphics[width=5cm]{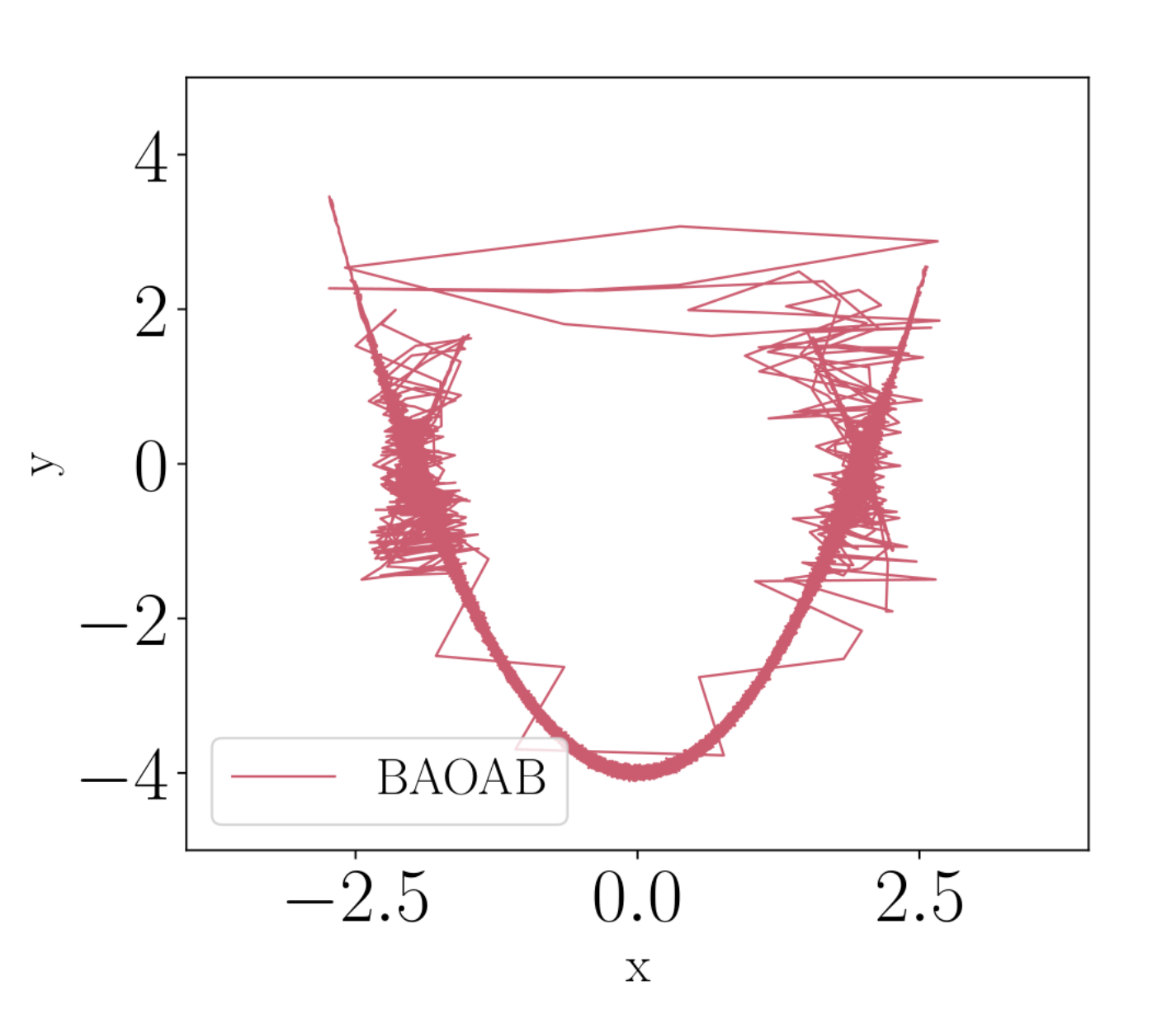}
\includegraphics[width=5cm]{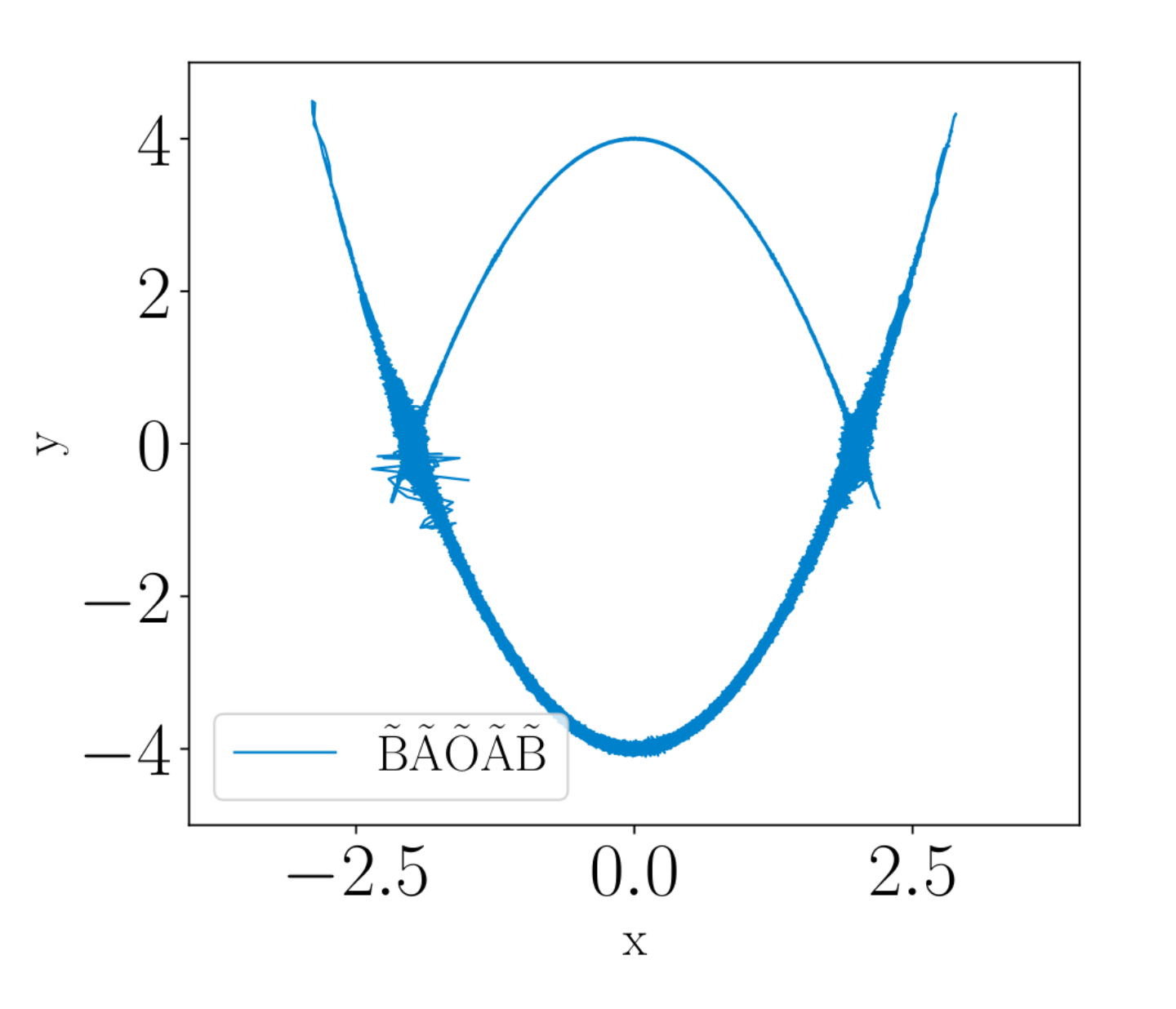} 
\caption{Plots of the trajectories with $\rm{B}\rm{A}\rm{O}\rm{A}\rm{B}$ with a stepsize rescaled by the average value taken by the monitor function ($0.73$) in $\tilde{\rm{B}}\tilde{\rm{A}}\tilde{\rm{O}}\tilde{\rm{A}}\tilde{\rm{B}}$ where $h=.0275$. The monitor function has $r=1$ and $\alpha=2$. The transformed system visits the upper narrow channel.}
\label{fig:two_paths_vs}
\end{figure}

%Note integrating the narrower parts of the lower wider channel is difficult for the transformed stepsize as the monitor function only reduces the step in the narrow upper channel and not at the extremities of the lower wider channel. 

%----------------------------------------------------------------------------------------
%	Discussion and Conclusion
%----------------------------------------------------------------------------------------
\section{Conclusions}
\label{sec:discussions_conclusions}In this work, we explored stochastic differential equations that allow efficient approximation of
a target probability distribution. These invariant-preserving transformed dynamics are fundamentally based on an adaptive time stepping in overdamped and underdamped Langevin dynamics, respectively. The target measure is preserved by adding a correction term to the usual, direct time-rescaled dynamics, effectively changing the dynamics. Throughout this work, the adaptivity is controlled by a monitor function, and we provided both formal criteria and heuristics for its choice. The formal criteria ensure existence and uniqueness of weak solutions to the overdamped and underdamped invariant preserving transformed dynamics, as well as uniqueness of their invariant measure and ergodicity. The heuristics help users to design an appropriate monitor function that ensures efficiency. While numerical integration of the overdamped case is simple, we provided seven novel splitting schemes to integrate the invariant-preserving transformed underdamped system. Yet, a careful convergence analysis of these schemes is necessary for wider adoption. The efficiency of this new adaptive timestepping approach was illustrated  
on several examples.

\bibliographystyle{siamplain}
\bibliography{references}

\end{document}